\numberwithin{equation}{section}
\newtheorem{theorem}{Theorem}[section]
\newtheorem{lemma}[theorem]{Lemma}
\newcommand{\ttt}{\mathscr{T}}
\newcommand{\ann}{\mathbb{A}}
\newcommand{\anns}{\ann{(\rmin,\rplus)}}
\newcommand{\annr}{\ann(r^-,r^+)}
\newcommand{\scd}{T^{\prime*}}
\newcommand{\cd}{T^{\prime}}
\newcommand{\hil}{\mathcal{H}}
\newcommand{\chil}{\mathscr{H}}
\newcommand{\ddd}{\mathcal{D}}
\newcommand{\natu}{\mathbb{N}}
\newcommand{\bou}{\boldsymbol B(\hil)}
\newcommand{\sbou}{\boldsymbol{B}}
\newcommand{\boue}{\boldsymbol{B}(E)}
\newcommand{\spec}{\sigma(T)}
\newcommand{\rad}{r(T)}
\newcommand{\nul}{\mathcal{N}(}
\newcommand{\comp}{\mathbb{C}}
\newcommand{\bil}{S_{\mathbf{\lambda}}}
\newcommand{\disc}{\mathbb{D}}
\newcommand{\elu}{L^2(\mu)}
\newcommand{\pe}{P_E}
\newcommand{\mul}{\mathscr{M}_z}
\newcommand{\com}{C_{\phi,w}}
\newcommand{\cdcom}{C_{\phi^\prime,w}}
\newcommand{\szift}{S_\lambda}
\newcommand{\szifto}{S_{\lambda\to(\omega)}}
\newcommand{\cdszift}{S_{\lambda^\prime}}
\newcommand{\jad}{\kappa_\chil}
\theoremstyle{definition}
\newtheorem{ex}[theorem]{Example}
\newcommand*{\Le}{\leqslant}
\newcommand*{\Ge}{\geqslant}
\newcommand{\ran}{\mathcal{R}(}
\newcommand{\la}{\langle}
\newcommand{\ra}{\rangle}
\newcommand{\gwon}{[E]_{T^*,\cd}}
\newcommand{\sgwon}{[E]_{{\szift^*},{\cdszift}}}
\newcommand{\cgwon}{[E]_{{\com^*},{\cdcom}}}
\DeclareMathOperator{\gen}{Gen}
\DeclareMathOperator{\lin}{lin}
\DeclareMathOperator{\card}{card}
\DeclareMathOperator{\czil}{Chi}
\DeclareMathOperator{\des}{Des}
\DeclareMathOperator{\ro}{root}
\DeclareMathOperator{\parr}{par}
\DeclareMathOperator{\intt}{int}
\newcommand{\cycle}{\mathscr{C}_{\varphi}}
\newcommand{\rplus}{r^+_{w,\phi}}
\newcommand{\rmin}{r^-_{w,\phi}}
\newcommand{\kk}{k_{\phi(E)}}
\newcommand{\set}[1]{\left\{#1\right\}}
\newcommand{\norm}[1]{\left\Vert#1\right\Vert}
\begin{document}
\title[A Shimorin-type analytic model for left-invertible operators]{A Shimorin-type analytic model on an annulus\\ for left-invertible operators and applications}
   \author[P. Pietrzycki]{Pawe{\l} Pietrzycki}
   
   \subjclass[2010]{Primary 46E22, 47B32, 47B33; Secondary 47A10, 47B37, 47B38} \keywords{analytic model, weighted composition operator, weighted shift, weighted shift on directed three,  multiplication operator,  reproducing kernel Hilbert space, Hilbert space of holomorphic functions.}
   \address{Wydzia{\l} Matematyki i Informatyki, Uniwersytet
Jagiello\'{n}ski, ul. {\L}ojasiewicza 6, PL-30348
Krak\'{o}w}
   \email{pawel.pietrzycki@im.uj.edu.pl}
   \begin{abstract} A new  analytic model for left-invertible operators,  which extends both Shimorin's analytic model for left-invertible and analytic operators and Gellar's model for bilateral weighted shift is introduced and investigated. We show that a left-invertible operator $T$, which satisfies certain conditions can be modelled as a multiplication operator $\mul$ on a reproducing
kernel Hilbert space of vector-valued analytic functions on an annulus or a disc.  A similar result for composition
operators in $\ell^2$-spaces is established.
\end{abstract}
   \maketitle

   \section{Introduction}
   The classical Wold decomposition theorem (see \cite{wold}) states that if $U$ is isometry on Hilbert space $\hil$, then $\hil$ is the direct sum of two
subspaces reducing $U$, $\hil=\hil_u\oplus\hil_p$ such that $U|_{\hil_u}\in\sbou(\hil_u)$ is unitary and $U|_{\hil_p}\in\sbou(\hil_p)$ is unitarily equivalent to a unilateral shift.
This decomposition is unique and the canonical subspaces are defined by
\begin{align*}
  \hil_u&=\bigcap_{n=1}^\infty U^n\hil\quad \textrm{and}\quad
  \hil_p=\bigoplus_{n=1}^\infty U^nE,
\end{align*}
where $E=\nul U^*)=\hil\ominus U\hil$. The Wold decomposition theorem and results analogous to this theorem plays an important role in many areas of operator theory, including the invariant subspace problem for Hilbert spaces of holomorphic functions.
The interested reader is
referred to \cite{sloc,paga,burd,burd2,shi,kall,kall2,mand,jay}.

One of the key ideas in operator theory is that of viewing an operator  as multiplication by $z$ on a Hilbert space consisting of (vector-valued) holomorphic functions. The point is that this multiplication  operator is much easier to analyse than is the case in the original setting because of the richer structure of a space of holomorphic functions. This is its great advantage and one of the reasons why it attract attention of researchers.
An excellent example
of an interplay between weighted shift operators and analytic functions is the problem of describing all invariant subspaces of weighted shifts and the celebrated Beurling-Lax theorem.
There are numerous results in the
literature relating  analytic models  for the operators in certain classes.
We mention some selected models:
\begin{itemize}
\item[$\bullet$] the Sz.-Nagy-Foias model for contraction (see \cite{contr}),
\item[$\bullet$] the analytic model for weighted shifts (see \cite{gel}),
\item[$\bullet$] the  analytic
model  of  a  pure  hyponormal  operator $T$
with  rank  one  self-commutators
$[T^*,T]$ (see \cite{pin}),
    \item[$\bullet$] the model for the class
$\mathscr{F}$
of pure operators $T$ on a Hilbert space $\hil$ satisfying
\begin{equation*}
     \la T^mg, T^nh\ra=0,\qquad g,h\in [T^*,T]\hil,\:\: m\neq n ,\:\: m,n\in\natu,
\end{equation*}
where $[T^*,T]:=T^*T-TT^*$ (see \cite{xia4}),
\item[$\bullet$] Shimorin's analytic model for left-invertible analytic operators (see \cite{shi}),
\item[$\bullet$] the analytic model of doubly commuting contractions (see \cite{sark}).
\end{itemize}
The interested reader is referred to \cite{chav,jewe,xia,xia2,xia3,xia4} for further information.

In \cite{shi} S. Shimorin  obtain a weak analog
of the  Wold decomposition theorem, representing operator close to isometry in some sense as a direct sum
of a unitary operator and a shift operator acting in some reproducing kernel Hilbert space of vector-valued
holomorphic functions defined on a disc. The construction of
 the Shimorin's model for a left-invertible analytic operator $T\in \bou$ is as follows.
Let $E:=\nul T^*)$ and define a 
vector-valued  holomorphic  functions $U_x$ as
  \begin{equation*}
   U_x(z) =
\sum_{n=0}^\infty
(P_E{T^{\prime*n}}x)z^n,\quad z\in \disc({r(\cd)}^{-1}),
 \end{equation*}
where $\cd$
is the Cauchy dual of $T$. Then
we equip the obtained space of analytic functions $\chil:=\set{U_x:x\in \hil}$
with the inner product induced by $\hil$. The operator $ U:\hil\ni x\to U_x\in \chil$
becomes a unitary isomorphism. It turns out that the operator $T$ is unitarily equivalent to the operator $\mul$ of multiplication
by $z$ on $\chil$ and $\scd$ is unitarily equivalent to the operator $\mathscr{L}$  given by 
\begin{equation*}
     (\mathscr{L}f)(z)=\frac{f(z)-f(0)}{z}, \quad f\in\chil.
\end{equation*}
Moreover, Shimorin proved that $\chil$ is a  reproducing kernel Hilbert space in the following sense: \textit{the reproducing kernel} for $\chil$ (see \cite{shi}) is an $\boue$-valued function of two variables $\jad:\Omega\times\Omega\rightarrow \boue$ such that
   \begin{itemize}
       \item[(i)] for any $e\in E$ and $\lambda \in \Omega$
       \begin{equation*}
           \jad(\cdot,\lambda)e\in \chil,
       \end{equation*}
       \item[(ii)]for any $e\in E$, $f\in \chil$ and $\lambda \in \Omega$
       \begin{equation*}
           \la f(\lambda),e\ra_E=\la f,\jad(\cdot,\lambda)e\ra_\chil.
       \end{equation*}
   \end{itemize}
The interested reader is referred to \cite{shaf1} and \cite{shaf2} for further 
facts 
concerning 
the reproducing 
kernel  Hilbert 
space 
and 
its  multiplication 
operators. 

The substitution operation is basic to mathematics therefore
composition operators naturally appear in many  areas of mathematics. They play an important role in ergodic theory and functional analysis. The  class of composition operators is related to 
other areas of operator theory in somewhat surprising ways. S. Banach and M. Stone proved that a surjective linear isometry  $T:C(X)\to C(Y)$ is a weighted composition operator. The analogical result for the Hardy spaces $H^p(\disc)$ (with $p\Ge1$ and $p\neq 2$) was shown by Forelli in \cite{forel}. Furthermore, commutants of many analytic Toeplitz  operators are generated by composition and multiplication operators. 
The literature on this subject is vast and still growing 
(see e.g., \cite{chaf,sin,sin2,nord,nord2,whit,har,lam1,lam2,dib,emb,bur,b-j-j-s,carl,ja,dym2}).

   The class of  weighted shifts on a directed tree was introduced in \cite{memo} and intensively studied
since then (see e.g.,  \cite{9,geh,chav,chav2,planeta,bdp,adv,mart,dym2}). Z.J. Jab{\l}o\'{n}ski, I. B. Jung and
J. Stochel  realized  the  importance  of  this  class as  a  vehicle  to  collect  a  number of interesting examples  and counterexamples (see e.g., \cite{dym,9,ja,ja2,trep,triv,dodana,sque,jabl}).

  The analytic aspects of the theory of composition operators, weighted shifts  and weighted shifts on a directed tree were studied by many authors.
   As was mentioned by A.L. Shields in the paper \cite{shild} the fact that weighted shift can be  viewed as  multiplication by $z$ on a Hilbert space of formal power series
has been long folklore and this point of view was taken by R. Gellar (see \cite{gel,gel2}). He showed that
the commutant of any weighted shift operator consists of certain formal power
series in the operator, and hence that the commutant is abelian. According to \cite{shild} the  spectrum of weighted shift operator
is either 
an annulus 
or a disk. Some results on the spectrum and commutants of composition operators were obtained in \cite{rid} and \cite{carl}. 
In \cite{chav} S. Chavan and S. Trivedi showed that a weighted shift $\szift$ on  a rooted directed tree with finite branching index is analytic therefore  can be modelled as a multiplication operator $\mul$ on a reproducing
kernel Hilbert space $\chil$ of $E$-valued holomorphic functions on  a disc centered
at the origin, where $E := \nul\szift^*)$. Moreover, they proved that the reproducing kernel associated with $\chil$
is multi-diagonal.
It is worth pointing out that  the commutant and reflexivity for
$n$-tuples of multiplication operators by independent
variables
$z_1,\dots,z_n$
on a reproducing Hilbert space of
vector-valued holomorphic functions were studied in 
the paper \cite{poder}.

Recently, the analytic structure
of weighted shifts on directed trees was also studied by P. Budzy{\'n}ski,  P. Dymek, A. P{\l}aneta and M. Ptak.
In \cite{bdp} they
showed that a weighted shift on a rooted directed tree is related to a
multiplier algebra
of coefficients of analytic functions. They  used this relation to provide a
kind of functional calculus for functions from multiplier algebras and to
study spectral properties of weighted shift on a rooted directed tree. Moreover in \cite{dym} they  extended the notion of multipliers to left-invertible and analytic operators and  characterize the commutant of such operators in  terms of generalized multipliers. This line of investigation
was continued in \cite{ja4}.

In this paper, we provide a new  analytic model for left-invertible operators, which extends both Shimorin's analytic model for left-invertible  and analytic operators (see Theorem \ref{pokshi}) and Gellar's model for a bilateral weighted shift (see Example \ref{pokgel}). We show that a left-invertible operator $T$, which satisfies certain conditions can be modelled as a multiplication operator $\mul$ on a reproducing
kernel Hilbert space of vector-valued analytic functions on an annulus or a disc (see Theorem \ref{przep}).  As an application of this model, we obtain significantly improved model for weighted composition operator upon provided the symbol of this operator
has finite branching index (see Theorem \ref{modcom}). In particular,  we describe the inner and outer radius of convergence for weighted composition operators only  in terms of its
weight and symbol.



   \section{Preliminaries}\label{prel}
   In this paper, we use the following notation. The
fields of rational, real and complex
numbers are denoted by $\mathbb{Q}$,
$\mathbb{R}$ and $\mathbb{C}$, respectively. The
symbols $\mathbb{Z}$, $\mathbb{Z}_{+}$ and $\mathbb{N}$
stand for the sets of integers,
positive integers and nonnegative integers, respectively. Set 
$\disc(r)=\set{z\in \comp\colon |z|< r}$ and
$\ann(r^-,r^+)=\set{z\in \comp\colon r^-< |z|< r^+}$ for $r,r^-,r^+\in[0,\infty)$.
The expression "a countable set" means a
finite set or a countably infinite set.

All Hilbert spaces considered in this paper are assumed to be complex. Let $T$ be a linear operator in a complex Hilbert
space $\mathcal{H}$. Denote by  $T^*$  the adjoint
of $T$.  We write $\bou$ 
 for the $C^*$-algebra of all bounded operators.
The spectrum, point spectrum and spectral radius of $T\in\bou$ is denoted by $\spec$, $\sigma_p(T)$ and $\rad$
respectively.
Let $W$ be a subset of $\hil$. Then
$\lin W$, $\bigvee W$ stands for the smallest linear subspace, closed subspace generated by $W$, respectively. We use the notation
 $\la x\ra$ in place of $\lin\{x\}$, for $x\in \hil$.
Let $T \in\bou$.
We say that $T$ is \text{left-invertible} if there exists $S \in \bou$ such
that $ST = I$. The \textit{Cauchy dual operator} $\cd$ of a left-invertible operator $T\in \bou$ is defined by
\begin{equation*}
 \cd:=T(T^*T)^{-1}. 
\end{equation*}
Note that $T$
is left-invertible if and only if there exists a constant $c>0$ such that $T^*T\Ge cI$. The notion of the Cauchy dual operator has been introduced and studied by Shimorin
in the context of the wandering subspace problem for Bergman-type operators \cite{shi}. We call $T$ \textit{analytic} if $\hil_\infty:=\bigcap_{i=1}^\infty T^i\hil=\set{0}$. Let $\Omega\subset\comp$ be such that $\intt\Omega=\Omega\neq\emptyset$. A function $f:\Omega\to\hil$ is said to be \textit{holomorphic} on $\Omega$ if $f$ is differentiable. 

Let $X$ be a set and $\varphi:X\to X$. If $n\in \mathbb{Z}$, then the $n$-th iterate of $\varphi$ is given by $\varphi^{(n)}=\varphi\circ\varphi\circ\dots\circ\varphi$, $\varphi$ composed with itself $n$-times and $\varphi^{(0)}$ is identity function. For $x\in X$ the set 
   \begin{equation*}
 [x]_\varphi=\{y\in X: \text{there exist } i,j\in \natu \text{  such that  } \varphi^{(i)}(x)=\varphi^{(j)}(y) \}
   \end{equation*} 
   is
called the \textit{orbit of}  $\varphi$ containing $x$. If $x\in X$ and $\varphi^{(i)}(x)=x$ for some $i\in \mathbb{Z}_+$, then the \textit{cycle of} $\varphi$ containing $x$ is the set
\begin{equation*}
  \cycle=\{\varphi^{(i)}(x)\colon i\in \natu \}.
\end{equation*}
   Define the function $[\varphi]:X\rightarrow \mathbb{Z}$ by
   \begin{itemize}
       \item[(i)] $[\varphi](x)=0$ if  $x$ is in the cycle of $\varphi$
       \item[(ii)] $[\varphi](x^*)=0$, where $x^*$ is a fixed element of orbit $F$ of $\varphi$ not containing a cycle,
       \item[(iii)]
     $[\varphi](\varphi(x))=[\varphi](x)-1$ if $x$ is not in a cycle of $\varphi$.
   \end{itemize}We set
    \begin{equation*}
    \gen_\varphi{(m,n)}:=\{x\in X\colon m\Le[\varphi](x)\Le n\}
\end{equation*}for $m,n\in\mathbb{Z}$.
Let $(X,\mathscr{A},\mu)$ be a $\mu$-finite measure space, $\varphi: X \rightarrow X$ and $w:X \rightarrow \comp$ be
measurable transformations.
By a \textit{weighted composition 
operator} $\com$ in $\elu$ we mean a mapping
\begin{align*}
\ddd(\com)&=\{f\in \elu : w(f\circ\varphi)\in \elu\},
 \\\notag
\com f&=w(f\circ\varphi),\quad f \in\ddd(\com).
\end{align*} 
We call $\varphi$ and $w$ the \textit{symbol} and the \textit{weight} of $\com$ respectively.

Let us recall some useful properties of composition operator we need
in this paper:

\begin{lemma}\label{podst}Let $X$ be a countable set, $\varphi: X \rightarrow X$ and $w:X \rightarrow \comp$ be
measurable transformations. If $\com\in \sbou(\ell^2(X))$, then for any $x\in X$ and $n\in \mathbb{Z}_+$
\begin{itemize}
\item[(i)]$\com^*e_x=\overline{w(x)}e_{\varphi(x)}$
\item[(ii)] $\com e_x=\sum_{y\in\varphi^{-1}(x)}w(y)e_y$,
 \item[(iii)]$\com^{*n} e_x=\overline{w(x)w(\varphi(x))\cdots w(\varphi^{(n-1)}(x))}e_{\varphi^{(n)}(x)}$,
       \item[(iv)]$\com^n e_x=\sum_{y\in\varphi^{-n}(x)}w(y)w(\varphi(y))\cdots w(\varphi^{(n-1)}(y))e_y$,
     \item[(v)] $\com^*\com e_x=\Big(\sum_{y\in\varphi^{-1}(x)}|w(y)|^2\Big)e_x$.
   \end{itemize}
  
   \end{lemma}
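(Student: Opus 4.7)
The plan is to verify all four identities directly from the action of $\com$ on the standard orthonormal basis $\{e_x\}_{x\in X}$ of $\ell^2(X)$, using the pointwise formula $(\com f)(z)=w(z)f(\phi(z))$ and the orthogonality $\la e_x,e_y\ra=\delta_{x,y}$. I would begin by establishing (i) via matrix entries of $\com^*$: for every $y\in X$,
\begin{equation*}
\la \com^*e_x,e_y\ra=\la e_x,\com e_y\ra=\overline{(\com e_y)(x)}=\overline{w(x)\,e_y(\phi(x))}=\overline{w(x)}\,\delta_{\phi(x),y},
\end{equation*}
which identifies the unique nonzero coordinate of $\com^*e_x$ and yields (i). Statement (ii) then follows by induction on $n$: applying (i) to the expression for $\com^{*(n-1)}e_x$ and using (i) once more with $x$ replaced by $\phi^{(n-1)}(x)$ telescopes the scalar factors into the product $\overline{w(x)w(\phi(x))\cdots w(\phi^{(n-1)}(x))}$ and replaces the index by $\phi^{(n)}(x)$.

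For (iii) I would unwind the definition by iteration: a direct induction from $(\com f)(z)=w(z)f(\phi(z))$ gives $(\com^n f)(z)=w(z)w(\phi(z))\cdots w(\phi^{(n-1)}(z))\,f(\phi^{(n)}(z))$. Specializing $f=e_x$ produces a function supported precisely on $\phi^{-n}(x)$ whose value at $y\in\phi^{-n}(x)$ equals $w(y)w(\phi(y))\cdots w(\phi^{(n-1)}(y))$; rewriting this in the basis yields the claim. Identity (iv) then comes by applying $\com^*$, via (i), to the $n=1$ case of (iii): for each $y\in\phi^{-1}(x)$ one has $\phi(y)=x$, hence $\com^*(w(y)e_y)=w(y)\overline{w(y)}\,e_{\phi(y)}=|w(y)|^2\,e_x$, and summing over $y\in\phi^{-1}(x)$ produces (iv).

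The only mild subtlety is that $\phi^{-n}(x)$ may be countably infinite, so the sum in (iii) must be read as a norm-convergent series in $\ell^2(X)$; this convergence is automatic from the hypothesis $\com\in\sbou(\ell^2(X))$, since $\com^n e_x$ is then a bona fide element of $\ell^2(X)$ whose only nonzero coordinates are the ones described. No step presents a genuine obstacle: this lemma is a routine bookkeeping result whose entire content is the pointwise definition of $\com$ combined with the identification of $\com^*$ on the basis.
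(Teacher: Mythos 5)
Your verification is correct: the matrix-entry computation for (i), the induction for (ii), the iterated pointwise formula $(\com^n f)(z)=w(z)w(\phi(z))\cdots w(\phi^{(n-1)}(z))f(\phi^{(n)}(z))$ for (iii), and the combination of (i) with the $n=1$ case of (iii) for (iv) all check out, and your remark that boundedness of $\com$ (hence continuity of $\com^*$) handles the possibly infinite sums over $\phi^{-n}(x)$ is the right way to dispose of the only subtlety. The paper states Lemma \ref{podst} without proof, treating it as a standard recollection, so your routine basis computation is precisely the argument being taken for granted there; there is nothing to compare beyond noting that your write-up supplies the omitted details.
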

   \begin{proof}
   
(i) and (ii) See \cite[page 633]{carl}. 

(iii) and (iv) Apply (i), (ii) and induction on
$n$.

(v) This follows from (i) and (ii).
   \end{proof}
    We now describe Cauchy dual of weighted composition operator.
    
   \begin{lemma}\label{cdcom}Let $X$ be a countable set, $\varphi: X \rightarrow X$ and $w:X \rightarrow \comp$ be
measurable transformations. If $\com\in \sbou(\ell^2(X))$ is left-invertible operator, then the Cauchy dual $\com^\prime$ of $\com$ is also a weighted composition operator $\cdcom$ with the same symbol $\varphi:X\to X$ and weight $w^\prime:X\to\comp$ defined by
\begin{equation*}
  w^\prime(x):=  \frac{w(x)}{\Big(\sum_{y\in\varphi^{-1}(\varphi(x))}|w(y)|^2\Big)}.
\end{equation*}
   \end{lemma}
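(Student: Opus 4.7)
The plan is to work directly from the definition $\com^\prime = \com(\com^*\com)^{-1}$, extracting the formula via Lemma~\ref{podst}(iii) and (iv). The key observation is that $\com^*\com$ is diagonal in the canonical orthonormal basis $\{e_x:x\in X\}$ of $\ell^2(X)$: by Lemma~\ref{podst}(iv),
\begin{equation*}
  \com^*\com\, e_x = c_x\, e_x, \qquad c_x := \sum_{z\in \phi^{-1}(x)}|w(z)|^2.
\end{equation*}
Left-invertibility of $\com$ forces $\com^*\com$ to be invertible in $\bou$, so the scalars $c_x$ are uniformly bounded below by a positive constant; hence $(\com^*\com)^{-1}$ is again diagonal with $(\com^*\com)^{-1}e_x = c_x^{-1} e_x$.

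Next, I would apply $\com$ to both sides and invoke Lemma~\ref{podst}(iii) with $n=1$ to obtain
\begin{equation*}
  \com^\prime e_x \;=\; \com(\com^*\com)^{-1}e_x \;=\; \frac{1}{c_x}\sum_{y\in \phi^{-1}(x)} w(y)\, e_y \;=\; \sum_{y\in \phi^{-1}(x)} \frac{w(y)}{c_x}\, e_y.
\end{equation*}
Every $y$ appearing in the sum satisfies $\phi(y)=x$, so the denominator $c_x$ is nothing other than $c_{\phi(y)} = \sum_{z\in \phi^{-1}(\phi(y))}|w(z)|^2$; this rewrites the coefficient in the form displayed in the lemma (up to the indexing convention of the summation variable in the denominator).

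Finally, to present $\com^\prime$ itself as a weighted composition operator, I would define the new weight $w^\prime(y) := w(y)/c_{\phi(y)}$ on $X$ and observe that the formula above agrees on each basis vector $e_x$ with the formula for $C_{\phi,w^\prime}e_x$ given by Lemma~\ref{podst}(iii). Since $\operatorname{span}\{e_x : x\in X\}$ is dense in $\ell^2(X)$ and both operators are bounded, this identification extends to all of $\ell^2(X)$, yielding $\com^\prime = C_{\phi,w^\prime}$.

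The argument is essentially a one-line computation; the only point requiring mild care is the passage from $(\com^*\com)^{-1}e_x = c_x^{-1}e_x$ pointwise on basis vectors to a bounded operator identity, which is supplied by the uniform lower bound on $c_x$ forced by left-invertibility. The re-indexing of the denominator from $c_x$ to $c_{\phi(y)}$ is cosmetic but deserves an explicit sentence so that the stated form of the lemma is transparent.
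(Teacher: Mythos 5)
Your proposal is correct, and it supplies precisely the routine verification the paper omits: Lemma~\ref{cdcom} is stated there without proof, and the intended argument is exactly yours, namely diagonalize $\com^*\com$ via Lemma~\ref{podst}(iv), invert it using the uniform lower bound on $c_x=\sum_{z\in\phi^{-1}(x)}|w(z)|^2$ forced by left-invertibility, and apply Lemma~\ref{podst}(iii) with $n=1$. One point you treat too gently deserves to be said outright: the coefficient your computation yields, $w(y)\big/\sum_{z\in\phi^{-1}(x)}|w(z)|^2=w(y)\big/\sum_{z\in\phi^{-1}(\phi(y))}|w(z)|^2$, does not literally agree with the display in the lemma, whose denominator reads $\sum_{z\in\varphi^{-1}(y)}|w(z)|^2$. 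Your version is the correct one — it is what the paper's own bilateral-shift example uses (there $\bil^{\prime*}e_n=\lambda_n^{-1}e_{n-1}$ for positive weights) and it matches the standard formula for Cauchy duals of weighted shifts on directed trees — so the printed denominator should be read as a misprint for $\varphi^{-1}(x)$, not as a mere ``indexing convention.'' A final cosmetic remark: boundedness of $C_{\phi,w^\prime}$ is not known a priori, so instead of invoking ``both operators are bounded,'' it is cleaner to fix $y\in X$ and note that $f\mapsto(\com^\prime f)(y)$ and $f\mapsto w^\prime(y)f(\phi(y))$ are continuous linear functionals on $\ell^2(X)$ agreeing on the dense span of $\{e_x\}_{x\in X}$, whence $\com^\prime f=w^\prime\,(f\circ\phi)$ for all $f\in\ell^2(X)$ and $\com^\prime=C_{\phi,w^\prime}$ with full domain.
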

   \begin{proof}
 This is a direct
consequence of assertions (i) and (ii) of Lemma \ref{podst}.
   \end{proof}
 Let $\ttt = (V; E)$   be a directed tree ($V$ and
$E$ are the sets of vertices and edges of $\ttt$,
respectively). For any vertex $u \in V$ we
put $\textup{Chi}(u) = \{v \in V : (u, v) \in E\}$. Denote by $\parr$ the partial  function from $V$ to $V$ which assigns to a vertex $u$  a unique $v\in  V$ such that $(v,u)\in E$.  A vertex $u \in V$ is called a root of $\ttt$ if $u$ has no parent. If $\ttt$ has a root, we denote it by
$\textrm{ root}$. Put $V^{\circ}= V
\setminus\{\textrm{root}\}$ if $\ttt$ has a root and
$V^{\circ}=V$ otherwise.   The Hilbert
space of square summable complex functions on $V$
equipped with the standard inner product is denoted by
$\ell^2(V )$. For $u \in V$, we define $e_u \in
\ell^2(V)$ to be the characteristic function of the
 set $\{u\}$. It turns out that the set $\set{e_v}_{v\in V}$ is an orthonormal basis of $\ell^2(V)$. We put $V_\prec:=\set{v\in V: \card(\czil(V))\Ge2}$ and call the a member of this set a \textit{branching vertex} of $\ttt$

Given a system $ \lambda=\{\lambda_v \}_{v \in
V^{\circ}} $ of complex numbers, we define the
operator $S_\lambda$ in $\ell^2(V)$, which is called a
\textit{weighted shift} on $\ttt$ with weights
$\lambda$, as follows
   \begin{equation*}
\ddd(S_{\lambda}) =\{ f\in \ell^2(V ):
\varLambda_{\ttt} f \in \ell^2(V )\}\quad \textup{and}
\quad S_{\lambda} f = \varLambda_{\ttt} f \quad
\textup{for} \quad f \in \ddd(S_\lambda),
   \end{equation*}
where
   \begin{displaymath}
(\varLambda_{\ttt} f)(v) =\left\{\begin{array}{ll}
\lambda_v f(\textup{par}(v)) & \textrm{if  $v \in V^{\circ},$}\\
0 & \textrm{otherwise}.\end{array} \right.
 \end{displaymath}

\begin{lemma}[(Proposition 3.5.1 \cite{memo})] \label{ker}  If $\szift$ is a densely defined weighted shift on a directed tree $\ttt$
with weights  $ \lambda=\{\lambda_v \}_{v \in
V^{\circ}} $, then
\begin{equation*} 
\nul\szift^*) = \left\{ \begin{array}{ll}
\la e_{\ro} \ra\oplus\bigoplus_{u\in V_\prec}(\ell^2(\czil(u))\ominus\la \lambda^u\ra) & \textrm{if $\ttt$  has a root,}\\
\bigoplus_{u\in V_\prec}(\ell^2(\czil(u))\ominus\la \lambda^u\ra) & \textrm{
otherwise,}
\end{array} \right.
\end{equation*}
where $\lambda^u\in\ell^2(\czil(u))$ is given by $\lambda^u:\ell^2(\czil(u))\ni v\to\lambda_v\in \comp$.
\end{lemma}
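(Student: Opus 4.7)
The plan is to compute the adjoint $\szift^*$ explicitly on the orthonormal basis $\{e_v\}_{v\in V}$, translate the vanishing of $\szift^* g$ into orthogonality conditions on the restrictions of $g$ to child sets $\czil(u)$, and then exploit the partition of the non-root vertex set into these sets to assemble the result.

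First I would observe that the definition of $\varLambda_\ttt$ yields $\szift e_u=\sum_{v\in\czil(u)}\lambda_v e_v$ for every $u\in V$. Taking adjoints gives $\szift^* e_v=\overline{\lambda_v}\,e_{\parr(v)}$ for $v\in V^\circ$, and $\szift^* e_{\ro}=0$ when $\ttt$ has a root. For a general $g\in \ell^2(V)$ one then obtains
\begin{equation*}
(\szift^*g)(u) \;=\; \sum_{v\in\czil(u)}\overline{\lambda_v}\,g(v), \qquad u\in V,
\end{equation*}
so $g\in\nul\szift^*)$ if and only if, for every $u\in V$ with $\czil(u)\neq\emptyset$, the restriction $g|_{\czil(u)}$ is orthogonal to $\lambda^u$ inside $\ell^2(\czil(u))$.

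Next I would use the orthogonal decomposition
\begin{equation*}
\ell^2(V) \;=\; \langle e_{\ro}\rangle \;\oplus\; \bigoplus_{\substack{u\in V\\ \czil(u)\neq\emptyset}}\ell^2(\czil(u)),
\end{equation*}
omitting the first summand in the rootless case. This is legitimate because every $v\in V^\circ$ has a unique parent and hence belongs to exactly one child set. Combining with the previous step, a vector lies in $\nul\szift^*)$ precisely when its $\langle e_{\ro}\rangle$-component is arbitrary (since $\szift^* e_{\ro}=0$) and each of its $\ell^2(\czil(u))$-components lies in $\ell^2(\czil(u))\ominus\langle \lambda^u\rangle$.

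It remains to reduce the index set to $V_\prec$. If $u$ has $\czil(u)=\{v\}$, then $\ell^2(\czil(u))$ is one-dimensional and $\lambda^u=\lambda_v e_v$; under the standard convention that the weights of a densely-defined weighted shift on a directed tree are nonzero, $\langle\lambda^u\rangle$ exhausts this one-dimensional space and the corresponding block collapses to $\{0\}$. The surviving vertices are exactly those with $\card(\czil(u))\geq 2$, yielding the claimed formula. I anticipate that the only genuine bookkeeping lies in verifying the parent-induced partition of $V^\circ$ that powers the orthogonal decomposition; once that is in place, the block-by-block kernel computation is immediate.
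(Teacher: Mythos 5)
Your strategy is the standard one, and in fact the paper offers no proof of this lemma at all: it is quoted from \cite{memo}, where it is obtained exactly as you propose, via the adjoint formula $(\szift^{*}g)(u)=\sum_{v\in\czil(u)}\overline{\lambda_v}\,g(v)$ and the parent-induced orthogonal decomposition of $\ell^{2}(V)$. One technical gloss: since $\szift$ is only densely defined, ``taking adjoints'' on basis vectors needs justification. Dense definedness is equivalent to $\sum_{v\in\czil(u)}|\lambda_v|^{2}<\infty$ for every $u\in V$, which gives $e_u\in\ddd(\szift)$ and $\szift e_u=\lambda^{u}$; moreover the finitely supported vectors form a core for $\szift$, so $\mathcal{N}(\szift^{*})=\mathcal{R}(\szift)^{\perp}$ and membership of $g$ in the kernel is equivalent to $\la \szift e_u,g\ra=0$ for all $u$, i.e. to your blockwise conditions $g|_{\czil(u)}\perp\lambda^{u}$. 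This also settles the domain question in the ``if'' direction, which your phrase ``for a general $g\in\ell^{2}(V)$'' quietly skips.

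The genuine gap is the final reduction of the index set to $V_\prec$. There is no ``standard convention'' by which a densely defined weighted shift has nonzero weights; dense definedness only controls the sums $\sum_{v\in\czil(u)}|\lambda_v|^{2}$, and \cite{memo} explicitly allows $\lambda_v=0$ — which is precisely why the kernel formula there is stated with the direct sum taken over all $u\in V$ rather than over $V_\prec$. If $\czil(u)=\{v\}$ and $\lambda_v=0$, then $\ell^{2}(\czil(u))\ominus\la\lambda^{u}\ra=\la e_v\ra\neq\{0\}$ and indeed $\szift^{*}e_v=0$, so the $V_\prec$-version misses this summand: your collapse of the one-dimensional blocks needs $\lambda_v\neq 0$ whenever $v$ is an only child, and that must be assumed or derived, not invoked as a convention. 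In the setting where the lemma is actually used in this paper the hypothesis is harmless, since left-invertibility of $\szift$ gives $\|\szift e_u\|^{2}=\sum_{v\in\czil(u)}|\lambda_v|^{2}\Ge c>0$ and hence nonzero weights at only children; but as the lemma is stated (densely defined only), either add that assumption or keep the direct sum over all vertices with nonempty $\czil(u)$, as in \cite{memo}.
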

A subgraph of
a directed tree $\ttt$ which itself is a directed tree will be called a subtree of $\ttt$. We refer the reader to \cite{memo} for more details on
weighted shifts on directed trees.


   \section{Analytic model}\label{mymodel}
   
   This section provides an analytic  model for  left-invertible operators. We show that a
left-invertible operator, which satisfies certain conditions can be modelled as a multiplication
operator
on  a  reproducing  kernel  Hilbert  space  of  vector-valued  analytic  functions  on  an
annulus or a disc.

   Let $T\in \bou$  be a left-invertible operator and $E$ be a subspace of $\hil$ denote by $\gwon$ the  following subspace of $\hil$:
\begin{equation*}
\gwon:=\bigvee\big(\{T^{*n}x\colon x\in E, n\in \natu\}\cup\{{\cd}^n x\colon x\in E, n\in \natu\}\big),
\end{equation*}
where $\cd$ is the Cauchy dual of $T$. 

To avoid the repetition, we state the following assumption which will be used
frequently in this section.
   \begin{align} \tag{$\clubsuit$} \label{li}
\begin{minipage}{70ex} 
The operator  $T\in \bou$ is left-invertible   and $E$ is a closed subspace of $\hil$ such that $\gwon=\hil$.
\end{minipage}
    \end{align}

Suppose \eqref{li} holds. In this case, we may construct a Hilbert $\chil$
 associated with $T$, of formal Laurent series with vector  coefficients. We proceed as follows. For
each $x \in\hil$, define a formal Laurent series $U_x$ with vector  coefficients  as  
   \begin{equation}\label{mod}
       U_x(z) =\sum_{n=1}^\infty
(P_ET^{*n}x)\frac{1}{z^n}+
\sum_{n=0}^\infty
(P_E{T^{\prime*n}}x)z^n.
 \end{equation}
   
 Let $\chil$ denote the vector space of formal Laurent series with vector  coefficients of the form $U_x$, $x \in \hil$.  
   Consider the map $U:\hil\rightarrow \chil$ defined by $Ux=U_x$. 
As shown in Lemma \ref{kernel} below, by the assumption $U$ is injective. In particular, we may equip the space
$\chil$ with the inner product induced from $\hil$, so that $U$ is unitary.
Observe that every $f\in\chil$ can be represented as follows
 \begin{equation*}
f(z)=\sum_{n=-\infty}^\infty
\hat{f}(n)z^n,
 \end{equation*}
where
\begin{equation*}
 \hat{f}(n) = \left\{ \begin{array}{ll}
\pe{\scd}^nU^*f & \textrm{if $n\in\natu$,}\\
\pe T^{-n}U^*f & \textrm{if $n\in\mathbb{Z}\setminus\natu$.}
\end{array} \right.
 \end{equation*}
\begin{lemma}\label{kernel} Suppose  \eqref{li} holds and $\chil$, $U$ are as above. Then 
$\nul{U)}=\{0\}$. 
\end{lemma}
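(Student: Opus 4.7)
The plan is to recognize that $U_x \equiv 0$ as a formal Laurent series is equivalent to the vanishing of each of its coefficients, and that each such vanishing translates into an orthogonality of $x$ against one of the generators of $\gwon$; hypothesis \eqref{li} then immediately forces $x=0$. There is no convergence issue to worry about because $\chil$ is, by construction, the vector space of formal Laurent series, so ``$U_x = 0$'' is a purely algebraic assertion about the coefficients.

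Concretely, fix $x \in \hil$ with $Ux = U_x = 0$. Reading off each Laurent coefficient in \eqref{mod} yields $P_E T^{*n}x = 0$ for every $n \geq 1$ and $P_E {\scd}^n x = 0$ for every $n \geq 0$. For any $e \in E$, the first family of identities dualizes via $\la x, T^n e\ra = \la T^{*n}x, e\ra = 0$ for $n \geq 1$, while the second yields $\la x, {\cd}^n e\ra = \la {\scd}^n x, e\ra = 0$ for $n \geq 0$ (using that $\cd$ and $\scd$ are Hilbert-space adjoints of one another). Consequently $x$ is orthogonal to every vector in $\set{T^n e : e \in E, n \geq 1} \cup \set{{\cd}^n e : e \in E, n \geq 0}$, hence to its closed linear span, which is $\gwon$. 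Invoking \eqref{li}, $\gwon = \hil$, and therefore $x \perp \hil$, i.e., $x = 0$.

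The only real subtlety is notational: one must align the operators $T^{*n}$ and ${\scd}^n$ that appear in the coefficients of $U_x$ with the (adjoint) iterates of the operators generating $\gwon$, so that the orthogonality produced by the vanishing of the coefficients actually matches the generating set promised by \eqref{li}. Once those conventions are pinned down, no spectral, analytic, or approximation input is needed — the statement is essentially a tautological unwinding of the definition of $U$ combined with assumption \eqref{li}.
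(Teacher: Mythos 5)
Your proof is correct and follows essentially the same route as the paper's: the vanishing of every Laurent coefficient of $U_x$ is dualized, via $P_E$ and adjoints, into orthogonality of $x$ to the generating family of $\gwon$, and \eqref{li} then forces $x=0$. The only divergence is the placement of adjoint stars (you read the coefficients in \eqref{mod} literally as $P_ET^{*n}x$ and conclude $x\perp T^{n}E$, whereas the paper's own proof treats them as $P_ET^{n}x$ and concludes $x\perp T^{*n}E$, matching the displayed definition of $\gwon$); this traces back to the paper's inconsistent notation and does not affect the structure or validity of the argument.
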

\begin{proof}
Suppouse that $x\in \hil$ is such that 
\begin{equation*}
 \pe T^nx=0 \quad\text{and} \quad \pe {\scd}^nx=0,\quad n\in \natu.
\end{equation*}
Then for every $y\in E$
\begin{equation*}
 \la T^nx,y\ra=0 \quad\text{and} \quad \la {\scd}^nx,y\ra=0,\quad n\in \natu.
\end{equation*}
  This implies
  \begin{equation*}
 \la x,T^{*n}y\ra=0 \quad\text{and} \quad \la x,{\cd}^ny\ra=0,\quad n\in \natu.
\end{equation*}
We see that the above condition is equivalent to the following one
\begin{equation*}
    x \:\bot \:\gwon(=\hil).
\end{equation*}
This completes the proof.

\end{proof}
As shown below, the operator $T$ is unitary equivalent to the operator $\mul:\chil\to\chil$ of multiplication
by $z$ on $\chil$ given by 
\begin{equation*}
    (\mul f)(z)=zf(z),\quad f\in\chil,
\end{equation*}
and operator $\scd$ is unitary equivalent to the operator $\mathscr{L}:\chil\to\chil$ given by
\begin{equation*}
   (\mathscr{L}f)(z)=\frac{f(z)-(P_{\nul \mul^*)}f)(z)}{z}, \quad f\in\chil.
\end{equation*}
\begin{theorem}\label{przep}
Suppose \eqref{li} holds. Then the following assertions are valid:
\begin{itemize}
    \item[(i)] $UT=\mul U$,
    \item[(ii)] $U\scd =\mathscr{L}U$.
\end{itemize}
\end{theorem}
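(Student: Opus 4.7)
The plan is to verify both intertwinings by direct computation from the defining series \eqref{mod}. Part $(i)$ reduces to a term-by-term comparison of $U_{Tx}(z)$ and $zU_x(z)$, enabled by the Cauchy-dual identity $\scd T = I$ (immediate from $\cd = T(T^*T)^{-1}$). For part $(ii)$, rather than repeat a lengthy coefficient match, I would deduce it from $(i)$ together with the unitarity of $U$ and the orthogonal splitting $I = T\scd + P_{\mathcal{N}(T^*)}$, which holds because $T$ is left-invertible and hence has closed range.

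For $(i)$, I expand $zU_x(z)$ and $U_{Tx}(z)$ as formal Laurent series in $z$ and match coefficients. The principal parts agree trivially since the coefficient of $z^{-n}$ is $\pe T^{n+1}x$ on both sides for $n \ge 1$. The analytic parts agree for $n \ge 1$ because $\scd^n T = \scd^{n-1}(\scd T) = \scd^{n-1}$, so $\pe\scd^n Tx = \pe\scd^{n-1}x$. The constant term is $\pe Tx$ on both sides, arising on the left from the $z^{-1}$ coefficient of $U_x$ and on the right as the $n=0$ term of the analytic part of $U_{Tx}$. Hence $U_{Tx}(z) = zU_x(z)$, i.e., $UT = \mul U$.

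For $(ii)$, taking adjoints in $\mul U = UT$ and using the unitarity of $U$ yields $\mul^* U = UT^*$, so $\mathcal{N}(\mul^*) = U\mathcal{N}(T^*)$ and $P_{\mathcal{N}(\mul^*)}U = UP_{\mathcal{N}(T^*)}$. Left-invertibility of $T$ gives $T\scd = T(T^*T)^{-1}T^* = P_{\mathcal{R}(T)} = I - P_{\mathcal{N}(T^*)}$. Since $\mul\mathscr{L} = I - P_{\mathcal{N}(\mul^*)}$ by the definition of $\mathscr{L}$, we compute
\[
\mul(\mathscr{L} U_x) = U_x - P_{\mathcal{N}(\mul^*)}U_x = U\bigl((I - P_{\mathcal{N}(T^*)})x\bigr) = U(T\scd x) = \mul(U_{\scd x}),
\]
the last step by $(i)$ applied to $\scd x$. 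Since $\mul$ is injective (being unitarily equivalent to the injective operator $T$), we conclude $\mathscr{L} U_x = U_{\scd x}$. The only real obstacle is the bookkeeping in $(i)$—especially tracking how the constant term of $zU_x$ arises from the $z^{-1}$ coefficient of $U_x$—since once this is done, $(ii)$ follows quickly from the algebraic identities $\scd T = I$ and $T\scd = I - P_{\mathcal{N}(T^*)}$.
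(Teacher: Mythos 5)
Your proof is correct, but it reaches (ii) by a different route than the paper, and it actually supplies details the paper omits. The paper's proof is a single coefficient computation for (ii): it expands $(U\scd x)(z)$ via \eqref{mod}, inserts $T\scd=I-P_{\nul T^*)}$ into the principal part, and reads off $(U\scd x)(z)=\bigl((Ux)(z)-(UP_{\nul T^*)}x)(z)\bigr)/z$; assertion (i) is not argued at all (it is precisely the easier coefficient match with $\scd T=I$ that you carry out), and the identification $UP_{\nul T^*)}=P_{\nul \mul^*)}U$, needed to recognize the last expression as $(\mathscr{L}Ux)(z)$, is left tacit. You instead prove (i) by matching coefficients and then deduce (ii) algebraically from (i), unitarity of $U$, and $T\scd=P_{\ran T)}=I-P_{\nul T^*)}$; this buys you two things: no second series computation, and an explicit verification that $\nul \mul^*)=U\nul T^*)$, which is exactly the point the paper glosses over. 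One small rewording is advisable at the end: a priori $\mathscr{L}U_x$ is only a formal Laurent series, not yet known to lie in $\chil$, so the cancellation of $z$ should be justified by the (coefficientwise obvious) injectivity of multiplication by $z$ on formal Laurent series, rather than by injectivity of $\mul$ on $\chil$ coming from its unitary equivalence with $T$; with that adjustment your deduction of (ii) is complete and matches the paper's use of the same two identities $\scd T=I$ and $T\scd=I-P_{\nul T^*)}$.
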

\begin{proof}
(i) Let $x\in \hil$. Applying \eqref{mod} to operator $T$ and vector $Tx$, we see that
\begin{align*}
(UTx)(z)&=\sum_{n=1}^\infty
(P_ET^{n}Tx)\frac{1}{z^n}+
\sum_{n=0}^\infty
(P_E{T^{\prime*n}}Tx)z^n
\\&=\sum_{n=1}^\infty
(P_ET^{n+1}x)\frac{1}{z^n}+(P_ETx)+
\sum_{n=1}^\infty
(P_E{T^{\prime*n-1}}x)z^n
\\&=z(Ux)(z).
\end{align*}

(ii) Since
\begin{align*}
(U\scd x)(z)&=\sum_{n=1}^\infty
(P_ET^{n}\scd x)\frac{1}{z^n}+
\sum_{n=0}^\infty
(P_E{T^{\prime*n+1}}x)z^n\\&=\sum_{n=1}^\infty
(P_ET^{n-1}(I-P_{\nul T^*)}) x)\frac{1}{z^n}+
\sum_{n=0}^\infty
(P_E{T^{\prime*n+1}}x)z^n\\&=\frac{(Ux)(z)-(UP_{\nul T^*)}x)(z)}{z}
\end{align*}
the proof is complete.
\end{proof}

Now we  show that in the case of a left-invertible and analytic operators
  our analytic model with $E:=\nul{T^*)}$  coincides with the Shimorin's analytic model.
\begin{theorem}\label{pokshi}
Let $T\in\bou$ be left-invertible and analytic, $\chil_1$, $U_1$ be the Hilbert space and the unitary map  construceted in \eqref{mod} with $E:=\nul T^*)$ and $\chil_2$, $U_2$ be the Hilbert space and the unitary map obtained in Shimorin's construction. Then $\chil_1=\chil_2$ and $U_1=U_2$.
\end{theorem}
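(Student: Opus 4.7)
My approach is to show that when $E=\nul T^*)$, the negative-power part of $U_1x$ in \eqref{mod} vanishes identically for every $x\in\hil$, so that $U_1x$ reduces to Shimorin's power series $U_2x$; this will give $U_1=U_2$ and $\chil_1=\chil_2$.

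First I would verify that \eqref{li} is satisfied in this setting. Since $T^*e=0$ for every $e\in E$, the $T^*$-orbit summand of $\gwon$ collapses to $E$ itself, so what remains is the Shimorin-type completeness $\bigvee_{n\ge 0}\cd^nE=\hil$. To obtain this I would iterate the orthogonal decomposition $I=P_E+T\scd$, which follows from $P_E=I-T(T^*T)^{-1}T^*$, yielding
\begin{equation*}
x=\sum_{k=0}^{n-1}T^kP_E\scd^kx+T^n\scd^nx,\qquad x\in\hil,\;n\ge 1.
\end{equation*}
If $x\perp\bigvee_n\cd^nE$, then every $P_E\scd^kx$ vanishes and the identity reduces to $x=T^n\scd^nx\in T^n\hil$ for every $n$; the analyticity of $T$ then forces $x=0$.

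With \eqref{li} available, the main claim follows from a one-line observation: for every $n\ge 1$ and $x\in\hil$,
\begin{equation*}
T^nx\in\ran T)\subseteq\overline{\ran T)}=(\nul T^*))^\perp=E^\perp,
\end{equation*}
so $P_ET^nx=0$. Hence the negative-power sum in \eqref{mod} vanishes identically, and
\begin{equation*}
(U_1x)(z)=\sum_{n=0}^{\infty}(P_E\scd^nx)z^n=(U_2x)(z)\qquad\text{for every }x\in\hil.
\end{equation*}
Therefore $U_1=U_2$, and consequently $\chil_1=U_1\hil=U_2\hil=\chil_2$, with matching Hilbert space structures because both norms are transported from $\hil$ via the respective unitary.

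The step I expect to be the main obstacle is the verification of \eqref{li}: the mixed character of $\gwon$ (orbits of $T^*$ together with orbits of $\cd$) requires the Shimorin-type decomposition above. Once that is in place, the remaining argument is essentially just the orthogonal relation $\ran T)\perp\nul T^*)$.
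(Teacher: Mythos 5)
Your proof is correct and takes essentially the same route as the paper: the heart of both arguments is that $\ran T)\perp\nul T^*)$, so $P_{\nul T^*)}T^nx=0$ for $n\Ge1$ and the negative-power part of \eqref{mod} vanishes, leaving exactly Shimorin's series. The only difference is cosmetic: where you verify $\bigvee_{n\Ge0}{\cd}^n\nul T^*)=\hil$ by hand via the iterated identity $I=P_E+T\scd$ together with analyticity, the paper just cites Shimorin's Proposition 2.7 ($\hil_\infty^\perp=[E]_{\cd}$), whose proof is the same decomposition.
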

\begin{proof}
Set $\hil_\infty:=\bigcap_{i=0}^\infty T^i\hil$. By \cite[Proposition 2.7]{shi},  $\hil_\infty^\perp=[E]_{\cd}$. Since $T$ is analytic $\hil_\infty=\{0\}$, we see that $[E]_{\cd}=\hil$. Therefore, condition \eqref{li} is satisfied.  By kernel-range decomposition, $P_{\nul T^*)}T^n=0$ for $n\in \mathbb{Z}_+$. Hence, the first sum in \eqref{mod} vanishes. This completes the proof.
\end{proof}
Now we describe how to obtain a collection of subspaces of $\hil$ with property \eqref{li} from a single subspace with this property.
\begin{theorem}
Suppose \eqref{li} holds.  Then for every $m\in\natu$ the following assertions hold:
\begin{itemize}
    \item[(i)]${\cd}^mE$  is a closed supspace and $[{\cd}^m E]_{T^*,\cd}=\hil$,
    \item[(ii)]the mapping $\Phi_m:\chil_0\to\chil_m$ defined by
    \begin{equation*}
     \Phi_m\Big( \sum_{n=-\infty}^{\infty}a_nz^n\Big)=\sum_{n=-\infty}^{\infty}(V_ma_{m+n})z^{n},\quad \sum_{n=-\infty}^{\infty}a_nz^n\in \chil_0
    \end{equation*}
    is a unitary isomorphism, where $\chil_k$ for $k\in \natu$ is the Hilbert space construceted in \eqref{mod} with subspace  ${\cd}^kE$ and  $V_k:E\to{\cd}^k E$ for $k\in \natu$ is defined by,
    \begin{equation*}
    Ve=P_{{\cd}^kE}T^ke, \qquad e\in E.
    \end{equation*}
\end{itemize}
\end{theorem}
\begin{proof}
 
(i) Since $T^*\cd=I$,  we get that $[{\cd}^m E]_{T^*,\cd}=\gwon$, $m \in \natu$. This in turn
implies that $[{\cd}^m E]_{T^*,\cd}=\hil$.  The operator ${\cd}^m$ is left-infertible and hence bounded below. This implies that the subspace  ${\cd}^mE$ is closed.

(ii) We will denote by $U_k$, $k\in \natu$  the  unitary operator of the form \eqref{mod} between $\hil$ and $\chil_k$. Fix $m\in \natu$.
First, we note that for every $e\in E$ and $m,n\in\natu$, we have
\begin{equation}\label{pomoc}
    \la T^nx,{\cd}^me\ra= \left\{ \begin{array}{ll}
\la T^{n-m}x,e\ra  & \textrm{if $n\Ge m$,}\\
\la{\scd}^{m-n} x,e\ra & \textrm{if $n<m$.}
\end{array} \right.
\end{equation}
Take $y\in {\cd}^{m}E$. Then there exists $e\in E$ such that $y={\cd}^{m}e$. Employing \eqref{pomoc}, we verify that 
\begin{align*}
  \la \widehat{(U_mx)}(-n),y\ra&=\la P_{{\cd}^mE}T^nx,y\ra=\la T^nx,{\cd}^me\ra
 \overset{\eqref{pomoc}}{=}\la \widehat{(U_0x)}(m-n),e\ra \\&=\la \widehat{(U_0x)}(m-n),T^{*m}{\cd}^me\ra=\la T^m\widehat{(U_0x)}(m-n),{\cd}^me\ra\\&=\la P_{{\cd}^mE}T^m\widehat{(U_0x)}(m-n),y\ra,
\end{align*}
for $n\in\natu$. This implies that 
\begin{equation}\label{wsu}
 \widehat{(U_mx)}(-n)=   P_{{\cd}^mE}T^m\widehat{(U_0x)}(m-n), \qquad n\in\natu.
\end{equation}
Arguing as above, we deduce that
\begin{align*}
  \la \widehat{(U_mx)}(n),y\ra&=\la P_{{\cd}^mE}{\scd}^nx,y\ra=\la {\scd}^nx,{\cd}^me\ra
{=}\la {\scd}^{n+m}x,e\ra
\\&=\la \pe{\scd}^{n+m}x,T^{*m}{\cd}^me\ra=\la T^{m}\pe{\scd}^{n+m}x,{\cd}^me\ra\\&=\la P_{{\cd}^mE}T^m\widehat{(U_0x)}(m+n),y\ra, 
\end{align*} 
for $n\in\natu$. As a consequence, we see that
\begin{align*}
   \widehat{(U_mx)}(n)= P_{{\cd}^mE}T^m\widehat{(U_0x)}(m+n), \qquad n\in\natu.
\end{align*} 
This and \eqref{wsu} imply that $\Phi_m$ is an isomorphism. Since the Hilbert space structure on $\chil_k$ for $k\in\natu$ is induced from $\hil$, we deduce that $\Phi_m$ is unitary. This completes the proof.
\end{proof}
For left-invertible operator $T\in\bou$, among all subspaces satisfying condition \eqref{li} we  distinguish those subspaces $E$ which satisfy the following condition
\begin{equation}\tag{$\spadesuit$}\label{prep} E\perp T^n E \qquad\text{and}\qquad E\perp {\cd}^n E, \qquad n\in \mathbb{Z}_+.
\end{equation}
A similar condition was studied in the contex of 2-isometries in \cite{ana} where analog of Wold decompositions was obtained.
\begin{theorem}Suppose \eqref{li} holds. Then the following assertions hold:

\begin{itemize}

\item[(i)] if additionaly \eqref{prep} holds, then $U(E)$ is a copy of $E$ in $\chil$, the subspace consisting of constant
functions; moreover, $E$-valued polynomials in $z$ are included in $\chil$,

\item[(ii)] $(\mul^*\mul)^{-1}\mul^*=\mathscr{L}$.
\end{itemize}
\end{theorem}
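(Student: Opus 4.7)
For assertion (i), the plan is to take an arbitrary $x\in E$, substitute it into the defining expression \eqref{mod} of $U_x$, and verify that every coefficient vanishes except the constant one. The constant term, namely the $n=0$ summand of the positive-power series, equals $P_E x = x$ because $x\in E$. The coefficient of $z^{-n}$ for $n\ge 1$ is $P_E T^{*n}x$; pairing against any $e\in E$ gives $\langle P_E T^{*n}x,e\rangle=\langle x,T^n e\rangle$, which vanishes by $E\perp T^n E$. The coefficient of $z^n$ for $n\ge 1$ is $P_E\scd^n x$ and vanishes by the analogous calculation using $E\perp \cd^n E$. Both orthogonality relations are precisely \eqref{prep}, so $U_x(z)=x$ and $\tilde{E}=U(E)$ is indeed the subspace of $E$-valued constant functions in $\chil$, unitarily isomorphic to $E$ because $U$ is unitary.

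For assertion (ii), the strategy is to pull the identity back to $\hil$ through the unitary $U$ using Theorem \ref{przep}. Since $T$ is left-invertible, $T^*T$ is invertible in $\bou$. Taking adjoints of the intertwining $UT=\mul U$ yields $UT^*=\mul^* U$, hence $\mul^*\mul=U(T^*T)U^*$ and $(\mul^*\mul)^{-1}=U(T^*T)^{-1}U^*$. Combining these,
\begin{equation*}
(\mul^*\mul)^{-1}\mul^*=U(T^*T)^{-1}T^*U^*=U\scd U^*,
\end{equation*}
the last equality coming from $\scd=\bigl(T(T^*T)^{-1}\bigr)^*=(T^*T)^{-1}T^*$. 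Now Theorem \ref{przep}(ii) says $U\scd=\mathscr{L}U$, equivalently $U\scd U^*=\mathscr{L}$, which is exactly the asserted identity.

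The main conceptual obstacle in (ii) is that $\mathscr{L}$ is defined through a projection onto $\nul\mul^*)$, so the equality with $(\mul^*\mul)^{-1}\mul^*$ is not at all evident from the two formulas themselves. Routing everything through the model bypasses this entirely: both sides are identified with the Cauchy dual adjoint $\scd$ of $T$ via $U$, so the projection onto $\nul\mul^*)$ never needs to be computed explicitly. For (i) the only subtlety is the direction of the inner product when converting $P_E T^{*n}x=0$ and $P_E\scd^n x=0$ into orthogonality statements of the form $E\perp T^n E$ and $E\perp\cd^n E$, but once the adjoint is taken correctly, \eqref{prep} furnishes exactly what is required.
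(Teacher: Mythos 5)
Your argument is correct, and for (ii) it takes a genuinely different route from the paper. For part (i) the paper gives no argument (it is dismissed as obvious), and your coefficient-by-coefficient verification is the intended one; the only point worth adding is that to identify $U(E)$ with \emph{the} subspace of constant functions, rather than merely a subspace of constant functions, you can note that any constant function in $\chil$ has value $e\in E$ and hence equals $U_e$ by what you proved, so it lies in $U(E)$ by injectivity of $U$ (Lemma \ref{kernel}). For part (ii) the paper argues differently: it first shows by a direct computation with the defining formula for $\mathscr{L}$ and the kernel--range decomposition that $\mathscr{L}\mul=I$; since also $(\mul^*\mul)^{-1}\mul^*\mul=I$, the two operators agree on $\mathcal{R}(\mul)$, and the proof is completed by checking separately that they agree on $\mathcal{N}(\mul^*)$ (both vanish there), using $\chil=\overline{\mathcal{R}(\mul)}\oplus\mathcal{N}(\mul^*)$. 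You instead conjugate by the unitary $U$: from $UT=\mul U$ you get $\mul^*=UT^*U^*$ and $(\mul^*\mul)^{-1}\mul^*=U(T^*T)^{-1}T^*U^*=U\scd U^*$, which equals $\mathscr{L}$ by Theorem \ref{przep}(ii). Your version is shorter and avoids both the piecewise verification and any explicit handling of the projection onto $\mathcal{N}(\mul^*)$, at the cost of invoking both intertwining relations of Theorem \ref{przep}; the paper's route has the side benefit of exhibiting $\mathscr{L}$ directly as a left inverse of $\mul$, a fact it uses elsewhere.
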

\begin{proof}
(i) This is obvious.

(ii) Fix any $x\in \hil$.
Combining 
Theorem \ref{przep} and the kernel-range decomposition, we deduce that
\begin{align*}
     (\mathscr{L}\mul Ux)(z)&=\frac{(\mul Ux)(z)-(UP_{\nul T^*)}U^{-1}\mul Ux)(z)}{z}\\&=\frac{z (Ux)(z)-(UP_{\nul T^*)}Tx)(z)}{z}=(Ux)(z),
\end{align*}
which means that $\mathscr{L}$ is a left-inverse of $\mul$. Since
\begin{equation*}\mathscr{L}\mul=I\quad \textrm{and}\quad(\mul^*\mul)^{-1}\mul^*\mul=I,
\end{equation*}   we see
that $\mathscr{L}|_{\ran \mul)}=(\mul^*\mul)^{-1}\mul^*|_{\ran \mul)}$. 
One can verify that \begin{equation*}
    \mathscr{L}|_{\nul \mul)}=(\mul^*\mul)^{-1}\mul^*|_{\nul \mul)},
\end{equation*} which completes the proof.
\end{proof}
Now we shall discuss the extent to which our formal Laurent series actually represent analytic functions. If the series \eqref{mod} is convergent in $E$ on $\Omega\subset\comp$ for every $x\in\hil$, then based upon Lemma \ref{zasid} below we regard the Hilbert space $\chil$   as a space  of  vector-valued  holomorphic  functions on $\Omega$ by identifying each formal Laurent  series \eqref{mod} with the function
\begin{equation*}
\tilde{U}_x\colon\Omega\ni z\to\sum_{n=1}^\infty
(P_ET^{*n}x)\frac{1}{z^n}+
\sum_{n=0}^\infty
(P_E{T^{\prime*n}}x)z^n \in E.
\end{equation*}
\begin{lemma}\label{zasid}
Let $\sum_{n=-\infty}^{\infty}a_nz^n$ be the formal Laurent  series which represent constant zero function on open and nonempty subset $\Omega\subset \comp$. Then $a_n=0$,
$n\in\mathbb{Z}$.
\end{lemma}
\begin{proof}
Take $e\in E$. Then the function
    $\Omega\ni z\to \sum_{n=-\infty}^{\infty}\la a_n,e\ra z^n \in \comp$ is holomorphic, on the one hand,
and identically equal to zero, on the other. By Identity theorem $\la a_n,e\ra=0$, $n\in\mathbb{Z}$. This shows that $ a_n=0$, $n\in\mathbb{Z}$ and hence completes the proof.
\end{proof}
\begin{theorem}
Suppose \eqref{li} holds. Let
\begin{align*}r^+:=&\inf_{x\in\hil}\liminf_{n\to\infty} \norm{\pe {\scd}^nx}^{-\frac{1}{n}},\\
r^-:=&\sup_{x\in\hil}\limsup_{n\to\infty} \norm{\pe T^{n}x}^{\frac{1}{n}}.
\end{align*}
If $r^+>r^-$, then   formal Laurent  series \eqref{mod} represent analytic functions on annulus $\ann(r^-,r^+)$.
\end{theorem}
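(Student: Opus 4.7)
The plan is to split the formal Laurent series $U_x$ into its principal part and its regular part and apply the Cauchy--Hadamard formula to each separately. Write $U_x=f^-_x+f^+_x$ with
\[
f^-_x(z)=\sum_{n=1}^\infty(\pe T^{*n}x)\,z^{-n},\qquad f^+_x(z)=\sum_{n=0}^\infty(\pe\scd^{n}x)\,z^n,
\]
and, using that $\pe$ is a contraction, estimate the coefficient norms by $\norm{\pe\scd^{n}x}\le\norm{\pe\scd^{n}}\norm{x}$ for the regular part and $\norm{\pe T^{*n}x}\le\norm{\pe T^{*n}}\norm{x}=\norm{T^{n}\pe}\norm{x}$ for the principal part; the latter is the quantity appearing in the statement as $\norm{\pe T^{n}}$ after transposition.

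For $f^+_x$, Cauchy--Hadamard applied to the $E$-valued power series gives a radius of convergence at least
\[
\Bigl(\limsup_{n\to\infty}\norm{\pe\scd^{n}}^{1/n}\Bigr)^{-1}=\liminf_{n\to\infty}\norm{\pe\scd^{n}}^{-1/n}=r^{+},
\]
so $f^+_x$ converges in norm, locally uniformly on the open disc $\set{z:|z|<r^{+}}$, and defines an $E$-valued analytic function there. For $f^-_x$ the substitution $w=1/z$ converts the series into an ordinary power series in $w$ with coefficients in $E$, whose Cauchy--Hadamard radius is at least $\bigl(\limsup_n\norm{\pe T^{*n}}^{1/n}\bigr)^{-1}=1/r^{-}$; hence $f^-_x$ converges locally uniformly on the exterior $\set{z:|z|>r^{-}}$ and is analytic there. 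Under the hypothesis $r^{+}>r^{-}$ the two regions overlap in the open annulus $\ann(r^-,r^+)$, on which $U_x=f^-_x+f^+_x$ is analytic as a sum of two $E$-valued analytic functions.

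The main obstacle here is really just bookkeeping: one must correctly place the $\limsup$ in each Cauchy--Hadamard application (which is why $r^{+}$ is phrased in terms of $\liminf$ of the reciprocals) and reconcile the operator-norm bound on the principal coefficients with the form $\norm{\pe T^{n}}$ used in the statement via the identity $\norm{\pe T^{*n}}=\norm{T^{n}\pe}$. The fact that a norm-convergent $E$-valued power series with positive radius of convergence defines a genuine holomorphic function (rather than merely a formal object) is standard, following either from direct verification of complex differentiability in norm or from Dunford's theorem applied to the scalar-valued functions $z\mapsto\la U_x(z),e\ra_E$ for $e\in E$.
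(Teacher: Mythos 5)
The paper states this theorem without any proof, so there is no argument of the author's to compare against; your Cauchy--Hadamard splitting of \eqref{mod} into a regular part converging for $|z|<r^+$ and a principal part converging for $|z|>r^-$ is clearly the intended route, and in substance it works.

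One step as you wrote it does not, and you should repair it. The principal part of \eqref{mod} has coefficients $\pe T^{n}x$, not $\pe T^{*n}x$: this is the reading used in the proof of Lemma \ref{kernel}, in the computation establishing Theorem \ref{przep}, and in the kernel formula of Theorem \ref{jad} (where the $z^{-i}$ terms produce $\pe T^{i}T^{*j}|_E$), and it is the only reading consistent with the definition $r^-=\limsup_{n\to\infty}\norm{\pe T^{n}}^{1/n}$ in the statement. Your attempted reconciliation --- that $\norm{\pe T^{*n}}=\norm{T^{n}\pe}$ ``is the quantity appearing in the statement as $\norm{\pe T^{n}}$ after transposition'' --- is false in general: $\norm{T^{n}P_E}$ and $\norm{P_ET^{n}}$ are different numbers for a generic $T$ and $E$, so with your coefficients the exterior radius of convergence would be controlled by $\limsup_n\norm{T^{n}\pe}^{1/n}$ rather than by $r^-$, and the theorem as stated would not follow. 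With the correct coefficients no transposition is needed: you bound $\norm{\pe T^{n}x}\le\norm{\pe T^{n}}\norm{x}$ directly, and the rest of your argument --- Cauchy--Hadamard for the $E$-valued series in $z$ and in $w=1/z$, the identity $\liminf_n\norm{\pe\scd^{n}}^{-1/n}=\bigl(\limsup_n\norm{\pe\scd^{n}}^{1/n}\bigr)^{-1}$, and the standard fact that a locally uniformly norm-convergent vector-valued power series is analytic --- goes through verbatim. Note also that what this yields is analyticity on the open annulus $r^-<|z|<r^+$; convergence on the boundary circles (which the paper's definition of $\ann(r^-,r^+)$ formally includes) cannot be claimed, so your conclusion is the correct one.
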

\begin{proof}
Fix $x\in\hil$. An application of the root test \cite[page 199]{rud} shows that the radius of convergence of  the regular part of the series \eqref{mod} is
\begin{equation*}
    R(x)=\liminf_{n\to\infty} \norm{\pe {\scd}^nx}^{-\frac{1}{n}},
\end{equation*}
and the radius of convergence of  the principal part of this series is
\begin{equation*}
    r(x)=\limsup_{n\to\infty} \norm{\pe T^{n}x}^{\frac{1}{n}}.
\end{equation*}
 This implies that the regular part and principal part are convergent for every $x\in \hil$  in the disc $\disc(r^+)$ and in the set  $\comp\setminus\disc(r^-)$,
respectively.
This completes the proof.
\end{proof}
As will be shown below, if the series \eqref{mod} is convergent in $E$ on $\Omega\subset\comp$ for every $x\in \hil$, then $\chil$ is a  reproducing  kernel  Hilbert  space  of  vector-valued  holomorphic  functions on $\Omega$.

\begin{theorem}\label{jad}
Suppose \eqref{li} holds and the series \eqref{mod} is convergent in $E$ on an annulus $\ann(r^-,r^+)$ with $r^-<r^+$ and  $r^-,r^+\in[0,\infty)$ for every $x\in\hil$. Then  $\chil$ is a  reproducing kernel  Hilbert  space  of $E$-valued  holomorphic  functions on $\ann(r^-,r^+)$. The reproducing kernel $\jad:\ann(r^-,r^+)\times\ann(r^-,r^+)\to \boue$ associated with $\chil$ is given by
\begin{align}\label{kern}
     \jad(z,\lambda)&=\sum_{i,j\Ge1}\pe T^{i}T^{*j}|_E\frac{1}{z^i}\frac{1}{{\bar{\lambda}}^j}+\sum_{i\Ge1,j\Ge0}\pe T^{i}{\cd}^{j}|_E\frac{1}{z^i}{\bar{\lambda}}^j\\&+\sum_{i\Ge0,j\Ge1}\pe {\scd}^{i}T^{*j}|_E{z^i}\frac{1}{{\bar{\lambda}}^j}
     +\sum_{i,j\Ge0}\pe {\cd}^{*i}{\cd}^{j}|_Ez^i{\bar{\lambda}}^j.\notag
 \end{align}
for any $z,\lambda \in \ann(r^-,r^+)$. Moreover, the following assertions hold.

\begin{itemize}
    \item[(i)]  For any $\lambda \in \annr$
    \begin{align}\label{sz1}
 &\sum_{n=1}^\infty
(P_ET^{n})\frac{1}{\lambda^n}+   \sum_{n=0}^\infty
(\pe{T^{*\prime n}})\lambda^n \in \boldsymbol B(\hil,E), \\\label{sz2}
&\sum_{n=1}^\infty
 T^{*n}\frac{1}{{\lambda}^n}+
\sum_{n=0}^\infty
 {T^{\prime n}}\lambda^n\in \boldsymbol B(E,\hil),
\end{align}
\item[(ii)] The series \eqref{kern}, \eqref{sz1} and \eqref{sz2} converges absolutely and uniformly in operator norm on any compact set contained in $\annr\times\annr$, $\annr$ and $\annr$, respectively.
\item[(iii)]
the function $\ann(r^-,r^+)\ni\lambda\to \jad(\cdot,\bar{\lambda})e\in\chil$, $e\in E$ is holomorphic and given by
\begin{align*}
    \jad(\cdot,\bar{\lambda})e=\sum_{n=1}^\infty
 UT^{*n}e\frac{1}{{\lambda}^n}+
\sum_{n=0}^\infty
 U{T^{\prime n}}e\lambda^n, \qquad \lambda \in \ann(r^-,r^+).
\end{align*}
\end{itemize}
\end{theorem}
\begin{proof}
We claim that series 
\begin{equation*}
    \sum_{n=0}^\infty
(\pe{T^{*\prime n}})\lambda^n,
\end{equation*}
converges absolutely and uniformly in the norm of $\boldsymbol B(\hil,E)$ on any compact set contained in $\ann(r^-,r^+)$.
Fix $r<r^+$.  It follows from our assumptions on the series in
\eqref{mod}
 that series
\begin{equation*}
    \sum_{n=0}^\infty
(\pe{T^{*\prime n}}x)r^n,
\end{equation*}
converges for every $x\in \hil$. Thus, there exists a constant
$C(r,x)>0$ such that
\begin{equation*}
    \|(\pe{T^{*\prime n}}x)r^n\|<C(r,x), \qquad n\in \natu.
\end{equation*}
By uniform boundedness principle (see \cite[Theorem 2.6]{rud}) we obtain that there exists a constant $M(r)>0$ such that
\begin{equation*}
    \|(\pe{T^{*\prime n}})r^n\|<M(r), \qquad n\in \natu.
\end{equation*}
If $|\lambda|<r$, then applying the above, we see that
\begin{align*}
     \|\sum_{n=0}^\infty
(\pe{T^{*\prime n}})\lambda^n\|&\Le\sum_{n=0}^\infty
\|(\pe{T^{*\prime n}})\lambda^n\|\Le \sum_{n=0}^\infty
\|(\pe{T^{*\prime n}})r^n\|\Big[\frac{|\lambda|}{r}\Big]^n\\
&\Le M(r)\sum_{n=0}^\infty
\Big[\frac{|\lambda|}{r}\Big]^n.
\end{align*}
This proves our claim. Following steps analogous to those above, we obtain that 
\begin{equation*}
    \sum_{n=1}^\infty
(P_ET^{n})\frac{1}{\lambda^n}
\end{equation*}
also converges absolutely and uniformly in the norm of $\boldsymbol B(\hil,E)$ on any compact set contained in $\ann(r^-,r^+)$. It follows from what has  already been proved that the same conclusion holds also for the series in \eqref{sz1}. This implies that the series \eqref{sz2} converges absolutely and uniformly in the norm of $\boldsymbol B(E,\hil)$ on any compact set contained in $\ann(r^-,r^+)$. Since the operator \eqref{kern} is a composition of the operators in \eqref{sz1} and \eqref{sz2}, the assertions (i) and (ii) are justified.

Let $\lambda \in \ann(r^-,r^+)$ and $e\in E$. Then
  \begin{align*}
       \la f(\lambda),e\ra_E&=\la \sum_{n=1}^\infty
(P_ET^{n}U^{-1}f)\frac{1}{\lambda^n}+
\sum_{n=0}^\infty
(P_E{T^{\prime*n}}U^{-1}f)\lambda^n,e\ra_E\\
&=\la U^{-1}f, \sum_{n=1}^\infty
T^{*n}e\frac{1}{\bar{\lambda}^n}+
\sum_{n=0}^\infty
{T^{\prime n}}e\bar{\lambda}^n\ra_E\notag
   \end{align*}
for any $f\in\chil$. As a consequence, we obtain
\begin{equation}\label{row1}
    \jad(\cdot,\lambda)=U\big(\sum_{n=1}^\infty
T^{*n}\frac{1}{\bar{\lambda}^n}+
\sum_{n=0}^\infty
{T^{\prime n}}\bar{\lambda}^n\big).
\end{equation}
This implies that

\begin{align*}
    \la \jad&(z,\lambda)e_0,e_1\ra_E=\la \jad(\cdot,\lambda)e_0,\jad(\cdot,z)e_1\ra_\chil\\
    &=\Big\la \sum_{n=1}^\infty
T^{*n}e_0\frac{1}{\lambda^n}+
\sum_{n=0}^\infty
{T^{\prime n}}\lambda^ne_0, \sum_{n=1}^\infty
T^{*n}e_1\frac{1}{z^n}+
\sum_{n=0}^\infty
{T^{\prime n}}e_1z^n\Big\ra_E,
\end{align*}
 for $e_0,e_1\in E$ and thus $\chil$ is a  reproducing kernel  Hilbert  space  of $E$-valued  holomorphic  functions on $\ann(r^-,r^+)$ and the reproducing kernel is given by \eqref{kern}.
 
 The assertion (iii) is a direct consequence of \eqref{row1} and (ii).
This completes
the proof.
\end{proof}


Now, we turn to the properties of the Cauchy dual operator $\cd$. The Cauchy dual operator $\cd$ of a left-invertible operator $T$ is itself left-invertible. Assume now that there exist a subspace $E\subset\hil$ such that $\gwon=\hil$ and $[E]_{\cd,T}=\hil$ hold. Then for both operators $T$ and $\cd$ we can construct Hilbert spaces $\chil$ and $\chil^\prime$  of $E$-valued Laurent series. Then
\begin{equation*}
    U^\prime_x(z) :=\sum_{n=1}^\infty
(P_ET^{\prime n}x)\frac{1}{z^n}+
\sum_{n=0}^\infty
(P_E(T^{*n}x)z^n.
\end{equation*}
and  $\chil'$ is the space of Laurent series of the form $U_x'$, $x\in\hil$.
\begin{theorem}
Let $T\in\bou$ and $E\subset\hil$ be a closed subspace such that $\gwon=\hil$, $[E]_{\scd,T}=\hil$ and  \eqref{prep} holds. 
Let $f$ and $g$ be $E$-valued series
\begin{equation*}
    f(z)=\sum_{n=0}^{\infty}a_nz^n\quad\textrm{and}\quad g(z)=\sum_{n=0}^{\infty}b_nz^n.
\end{equation*}
Then 
\begin{equation*}
    \la U^{-1}f,{U^{\prime -1}}g\ra=\sum_{n=0}^{\infty}\la a_n,b_n\ra.
\end{equation*}

\end{theorem}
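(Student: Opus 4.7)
The plan is to convert the stated identity into a Parseval-type computation, by combining the intertwining $UT=\mul U$ of Theorem~\ref{przep} (together with its analogue, applied to the companion construction for $\cd$) with a biorthogonality between the orbits $T^{n}E$ and $\cd^{n}E$ supplied by condition~\eqref{prep}.

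First I would rewrite $f$ and $g$ through the shift operators on $\chil$ and $\chil'$. Under~\eqref{prep}, the preceding theorem identifies $U(E)$ with the constants inside $\chil$, so $U^{-1}(e)=e$ for every $e\in E$; the symmetric argument, applied to the companion model, gives $(U')^{-1}(e)=e$ as well. Denoting by $\mathscr{M}_z'$ the multiplication by $z$ on $\chil'$, the intertwinings $U^{-1}\mul^{n}=T^{n}U^{-1}$ and $(U')^{-1}(\mathscr{M}_z')^{n}=\cd^{n}(U')^{-1}$, applied term by term to the partial sums of $f=\sum z^{n}a_{n}$ and $g=\sum z^{n}b_{n}$, yield
\begin{equation*}
U^{-1}f=\sum_{n=0}^{\infty}T^{n}a_{n},\qquad (U')^{-1}g=\sum_{n=0}^{\infty}\cd^{n}b_{n},
\end{equation*}
with convergence in $\hil$ by the unitarity of $U$ and $U'$.

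Next I would establish the biorthogonality relation
\begin{equation*}
\la T^{m}e_{1},\cd^{n}e_{2}\ra=\delta_{mn}\la e_{1},e_{2}\ra,\qquad e_{1},e_{2}\in E,\ m,n\in\natu.
\end{equation*}
The Cauchy dual identity $T^{*}\cd=I$ iterates to $T^{*m}\cd^{n}=\cd^{n-m}$ when $n\ge m$ and, by cancelling the other way, to $T^{*m}\cd^{n}=T^{*(m-n)}$ when $m\ge n$. Writing $\la T^{m}e_{1},\cd^{n}e_{2}\ra=\la e_{1},T^{*m}\cd^{n}e_{2}\ra$ and substituting, the case $n>m$ collapses to $\la e_{1},\cd^{n-m}e_{2}\ra$ and the case $m>n$ to $\la T^{m-n}e_{1},e_{2}\ra$; both vanish by~\eqref{prep}, while the diagonal reduces to $\la e_{1},e_{2}\ra$.

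With these two ingredients in hand, I would plug the expansions into the $\hil$-inner product and collapse the resulting double sum. By continuity of $\la\cdot,\cdot\ra$ and the biorthogonality, the finite partial sums satisfy
\begin{equation*}
\Big\la\sum_{m=0}^{M}T^{m}a_{m},\sum_{n=0}^{N}\cd^{n}b_{n}\Big\ra=\sum_{n=0}^{\min(M,N)}\la a_{n},b_{n}\ra,
\end{equation*}
and passing $M,N\to\infty$ delivers the claimed identity. The one genuinely delicate point is the transfer of Theorem~\ref{przep} and of the ``constants are $E$'' statement to the $U'$-construction, but the hypotheses $\gwon=\hil$, $[E]_{\cd,T}=\hil$, and~\eqref{prep} are symmetric in $T$ and $\cd$, so the proofs given earlier apply verbatim; this bookkeeping is the main, but essentially routine, obstacle to writing out the full proof.
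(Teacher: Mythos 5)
Your proposal is correct and takes essentially the same route as the paper: the paper's proof likewise reduces to monomials $f=e_0z^n$, $g=e_1z^m$, identifies $\la U^{-1}f,{U^\prime}^{-1}g\ra$ with $\la T^{n-m}e_0,e_1\ra$ (resp. $\la {\cd}^{m-n}e_0,e_1\ra$) via the Cauchy-dual identities, and then invokes \eqref{prep} to obtain $\delta_{nm}\la e_0,e_1\ra$. Your partial-sum and biorthogonality bookkeeping merely spells out the paper's terse ``it suffices to consider the case of monomials'' reduction, so there is no substantive difference in method.
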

\begin{proof}
It suffices to consider the case $f(z)=e_0z^n$ and $g(z)=e_1z^m$, $m,n\in \natu$, $e_0,e_1\in E$. Observe that
\begin{equation*} \la U^{-1}f,U^{\prime-1}g\ra= \left\{ \begin{array}{ll}
\la T^{n-m}e_0,e_1\ra & \textrm{if $n\Ge m$,}\\
\la {\cd}^{m-n}e_0,e_1\ra & \textrm{otherwise.} 
\end{array} \right.
\end{equation*}
Since \eqref{prep}, we deduce that $\la U^{-1}f,U^{\prime-1}g\ra=\delta_{m-n}\la e_0,e_1\ra$. This finishes  the proof.

\end{proof}

Now we use analytic model constructed in this section to discuss
spectral theory of left-invertible operators and its adjoints.
 \begin{theorem}\label{spec} Suppose \eqref{li} holds and the series \eqref{mod} is convergent  in $E$ for every $x\in\hil$ on open nonempty subset $\Omega\subset\comp$. Then the following assertions hold:
 \begin{itemize}
     \item[(i)] the point spectrum of $T$ is empty, that is $\sigma_p(T)=\emptyset$,
     \item[(ii)]
     $\mul^*\jad(\cdot,\mu) g=\bar{\mu}\jad(\cdot,\mu ) g$, for every $\mu\in\Omega$, $g \in E$,
     \item[(iii)] $\bar{\Omega}\subset\sigma_p(T^*)$,
     \item[(iv)] $\bigvee\set{\nul T^*-\bar{\mu})\colon \mu \in U}=\hil$, where $U\subset \Omega$ and $\intt U\neq \emptyset$.
 \end{itemize}
 \end{theorem}
 \begin{proof}
 
(i) Suppose, to derive a contradiction, that $\mu\in\sigma_p(T)$. Then by Theorem  \ref{przep}, $\mu\in\sigma_p(\mul)$. Let $f(z)=\sum_{n=-\infty}^{\infty}a_nz^n\in \chil$ be such that 
 $(\mul-\mu)f=0$. Using the properties of reproducing kernel, one gets the following
 \begin{equation*}
     \la (\mul-\mu)f,\jad(\cdot,\lambda)e \ra=0, \qquad \lambda\in\Omega,\: e\in E,
 \end{equation*}
 Since the series $f(z)=\sum_{n=-\infty}^{\infty}a_nz^n$ is convergent on $\Omega$, we
see that the above equality is equivalent to the following one
\begin{equation*}
 (\lambda-\mu)\sum_{n=-\infty}^{\infty}\la a_n,e\ra \lambda^n=0,\qquad \lambda\in\Omega,\: e\in E.
 \end{equation*}
 If $\mu=0$, then by Identity theorem
 \begin{equation*}
      \la a_n,e\ra = 0,\qquad e\in E,\: n \in \mathbb{Z}.
 \end{equation*}
 This means that $a_n=0$ for $ n \in \mathbb{Z}$. This shows
that $f = 0$, which gives (i).
We now consider the other case when $\mu\neq0$. Using Identity theorem again, we see that
 \begin{equation}\label{geo}
 \la a_n,e\ra = \mu^{-n}\la a_0,e\ra,\qquad n \in \mathbb{Z}.
 \end{equation}
 Suppouse that there exist some $e\in E$ such that $\la a_0,e\ra\neq0$. 
 Therefore, by \eqref{geo}
 \begin{equation*}
     \sum_{n=-\infty}^{\infty}\la a_n,e\ra \lambda^n=\la a_0,e\ra\sum_{n=-\infty}^{\infty}\Big(\frac{\lambda}{\mu}\Big)^n,\qquad \lambda\in\Omega,\: e\in E.
 \end{equation*}
Clearly, $\sum_{n=-\infty}^{\infty}(\frac{z}{\lambda})^n$ is divergent, which contradicts
our assumption that $\la a_0,e\ra\neq0$. This and \eqref{geo} shows that $a_n=0$ for $ n \in \mathbb{Z}$. As a consequence, we get $f=0$. This completes the proof of (i).

(ii) By Theorem \ref{jad}, we have
 \begin{align*}
     \la U_x,\mul^*\jad(\cdot,\lambda) g\ra&=\la \mul U_x,\jad(\cdot,\lambda )g\ra=\la \lambda U_x(\lambda), g\ra\\&=\la  U_x(\lambda),\bar{\lambda} g\ra=\la U_x,\bar{\lambda}\jad(\cdot,\lambda ) g\ra,
 \end{align*}
 for $x\in\hil$,  $\lambda \in \Omega$ and $g\in E$. This gives the equality
 \begin{equation*}
     \mul^*\jad(\cdot,\lambda) g=\bar{\lambda}\jad(\cdot,\lambda ) g.
 \end{equation*}
 
 (iii) This is a direct consequence of (ii).
 
 (iv) Suppouse that $f(z)=\sum_{n=-\infty}^{\infty}a_nz^n\in \chil$  is orthogonal to the subspace $\bigvee\set{\nul \mul^*-\bar{\mu})\colon \mu \in U}$. Since, by (ii), $\jad(\cdot,\mu) e\in \nul \mul^*-\bar{\mu})$ for every $e\in E$, the following equalities hold 
 \begin{equation*}
     \sum_{n=-\infty}^{\infty}\la a_n,e\ra \lambda^n=\la f(\lambda),e\ra=\la f,\jad(\cdot,\lambda)e\ra=0,\qquad \lambda\in U,\: e\in E.
 \end{equation*}
 By Identity theorem this implies that $a_n=0$ for every $n\in \mathbb{Z}$. Thus $f=0$ and $\bigvee\set{\nul \mul^*-\bar{\mu})\colon \mu \in U}=\chil$.
 \end{proof}
 \section{Weighted composition operators.} 
 In this section as an application of the model presented in Section \ref{mymodel}, we obtain significantly improved model for weighted composition operator upon provided the symbol of this operator
has finite branching index.

 We begin by recalling the definition of finite branching index.
 Let $\ttt=(V,E)$  be a rootless directed tree. Following \cite{chav}, we say that $\ttt$ has \textit{finite branching index} if there exist $m\in \natu$ such that
 \begin{equation*}
     \czil^k(V_\prec)\cap V_\prec=\emptyset,\quad k\Ge m,\:k\in\natu.
 \end{equation*}
 The next lemma shows that in the case of  rootless
directed tree with finite branching index
  there exist some special vertex.
\begin{lemma}[(\cite{chav})]\label{groot}
 Let $\ttt=(V,E)$  be a rootless directed tree with finite branching
index $m$. Then there exist a vertex $\omega\in V_\prec$ such that
\begin{equation}\label{root}
\card(\czil(\parr^{(n)}(\omega)))=1,\qquad n\in\mathbb{Z}_+.
\end{equation}
Moreover, if $V_\prec$ is non-empty, then there exists a unique $\omega\in V_\prec$ satisfying \eqref{root}.
 \end{lemma} The vertex $\omega\in V_\prec$ appearing in the statement of Lemma \ref{groot} is called
 \textit{generalized root}. We put $x^*:=\parr(\omega)$ in the definition of function $[\varphi]:X\rightarrow \mathbb{Z}$ for orbit $F$ of $\varphi$
not containing a cycle (see Section \ref{prel}).
 Since any self-map $\varphi:X\to X$ induces  a directed graph $(X, E^\varphi)$ (see Figure \ref{fig:M5}) given by 
 \begin{equation}\label{graph}
     E^\varphi=\set{(x,y)\in X\times X \colon x=\varphi(y)}
 \end{equation}
 it is natural to extend the notion of finite branching index to self-maps.
 We say that $\varphi$ has \textit{finite branching index} if \begin{equation*}\sup \set{|[\varphi](x)|: \card(\varphi^{-1}(x))\Ge2,\:\: x\in X}< \infty.
 \end{equation*}
 Perhaps it is appropriate at this point to note that a self-map with one orbit can have at most one cycle.

  \begin{figure}
     \centering
 \begin{tikzpicture}

 \begin{scope}[every node/.style={fill=gray!20,circle,thick, minimum width=30pt}]
    \node (A) at (2,2) {$x_{m-1}$};
    \node (B) at (3.41,1.41) {$x_m$};
    \node (C) at (4,0) {$x_0$};
    \node (D) at (3.41,-1.41) {$x_1$};
    \node (E) at (2,-2) {$x_2$};
    \node (F) at (0.59, -1.41) {$x_3$};
    \node (G) at (5.41,1.41) {$x_{0,1}$};
    \node (H) at (7,1.41) {$x_{0,2}$};
    \node[fill=white] (I) at (9.5,1.41) {};
    \node (J) at (5.41,-1.41) {$x_{1,1}$};
    \node (K) at (7,-1.41) {$x_{1,2}$};
    \node[fill=white] (L) at (9.5,-1.41) {};
\end{scope}

\begin{scope}[>={Stealth[black]},
              every node/.style={fill=white,circle},
              every edge/.style={draw=black,very thick}]
    \path [->] (A) edge[bend left=20] (B);
    \path [->] (B) edge[bend left=20]  (C);
    \path [->] (C) edge[bend left=20]  (D);
    \path [->] (D) edge[bend left=20]  (E);
    \path [->] (E) edge[bend left=20]  (F);
    \path [->] (F) edge[bend left=60,loosely dotted]  (A);
    \path [->] (G) edge  (C);
    \path [->] (H) edge  (G);
    \path [->] (I) edge[loosely dotted]  (H);
    \path [->] (J) edge  (C);
    \path [->] (K) edge  (J);
    \path [->] (L) edge[loosely dotted]  (K);
\end{scope}
\end{tikzpicture}
 \caption{} \label{fig:M5}
\end{figure}

 Recall that  a weighted shift on a rootless directed tree can be identified with
composition operator in $L^2$-spaces (see \cite[Lemma 4.3.1]{9}).

 Let $X$ be a countable set, $w:X\to \comp$ be a complex function on $X$,  $\varphi:X\to X$ be a transformation of $X$ and $\com$ be a weighted composition operator in $\ell^2(X)$.
We will need only consider
composition functions with one orbit, since an orbit induces a reducing subspace
to which the restriction of the weighted composition operator is again a weighted
composition operator.

The following lemma describes a  subspace $E\subset \ell^2(X)$ of the operator $\com$ which satisfies condition \eqref{prep} with $\com$ and $\cdcom$ in place of $T$.  It requires
considering two distinct cases.
\begin{lemma}\label{wond} Let $X$ be a countable set, $w:X\to \comp$ be a complex
function on $X$ and  $\varphi:X\to X$ be a transformation of $X$,  which  has  finite  branching  index. Let $\com$ be a weighted composition operator in $\ell^2(X)$ and
\begin{equation}\label{eee}
E: = \left\{ \begin{array}{ll}
\bigoplus_{x\in\gen_\varphi(1,1)}\la e_x\ra\oplus\nul (\com|_{\ell^2(\des(x))})^*) & \textrm{when $\varphi$ has  a  cycle,}
\\
\la e_\omega\ra\oplus\nul\com^*) & \textrm{otherwise,}
\end{array} \right.
\end{equation}
where  $\des(x):=\bigcup_{n=0}^\infty\varphi^{(-n)}(x)$ and $\omega$ is a generalized root of the tree defined by \eqref{graph}. Then the subspace
$E$ has
the following properties:
\begin{itemize}
\item[(i)]$\cgwon=\hil$ and $[E]_{{\com},{\cdcom^*}}=\hil$,
\item[(ii)] $E\perp \com^n E$ and $E\perp \cdcom^n E$,  $n\in \mathbb{Z}_+$.
\end{itemize}
\end{lemma}
\begin{proof}
(i) First, we consider the case when $\varphi$ does not have a cycle.  Clearly, the weighted composition operator $\com$ can be identified with
 a weighted shift $\szift$
on a rootless directed tree given by \eqref{graph}.
We show that the subspace $E:=\la e_\omega\ra\oplus\nul{\szift}^*)$ satisfies \eqref{prep} and $\sgwon=\ell^2(X)$. Note  that the space $\ell^2(\des(\omega))$
 is invariant for $\szift$. We will denote by $\szifto$ the operator $\szift|_{\ell^2(\des(\omega))}$. The subtree $\ttt_{\des(\omega)}$ of $\ttt$ is a directed tree with root $\omega$ and by
Lemma \ref{ker}, $\nul\szifto)= \la e_\omega\ra\oplus\nul{\szift}^*)$. Since by \cite[Lemma 3.3]{chav} $\szifto$ is analytic, it follows from Shimorin's analytic model that $[E]_{\szifto}=\ell^2(\des(\omega))$. Hence,
\begin{equation}\label{fp2}\bigvee\{\szift^n x\colon x\in E, n\in \natu\}=[E]_{\szifto}=\ell^2(\des(\omega)).
\end{equation}

By \cite[Proposition 2.7]{shi}, we have $[E]_{(\szifto)^\prime}=\ell^2(\des(\omega))$. Note that  the subspace $\ell^2(\des(\omega))$ is invariant for $\szift$ and $\szift^*\szift$ is diagonal. 
Recall that if $T\in\bou$ and closed subspace $\mathcal{G}$ is  invariant for $T$, then $(T|_\mathcal{G})^*=P_\mathcal{G}T^*|_\mathcal{G}$. Therefore, we have
\begin{align*}
(\szifto)^\prime&=\szift|_{\ell^2(\des(\omega))}((\szift|_{\ell^2(\des(\omega))})^*\szift|_{\ell^2(\des(\omega))})^{-1}\\
&=\szift|_{\ell^2(\des(\omega))}(P_{\ell^2(\des(\omega))}\szift^*|_{\ell^2(\des(\omega))}\szift|_{\ell^2(\des(\omega))})^{-1}\\
&=\szift(\szift^*\szift)^{-1}|_{\ell^2(\des(\omega))}=\cdszift|_{\ell^2(\des(\omega))}.
\end{align*}
This implies that 
\begin{equation}\label{fpzz}\bigvee\{\cdszift^n x\colon x\in E, n\in \natu\}=[E]_{(\szift|_{\ell^2(\des(\omega))})^\prime}=\ell^2(\des(\omega)).
\end{equation}
The assertion (i) of Lemma \ref{podst}, shows that
\begin{align*}\bigvee\{\szift^{*n} x\colon x\in E, n\in \natu\}&=\ell^2(X\setminus\des(\omega)),
\\\bigvee\{\cdszift^{*n} x\colon x\in E, n\in \natu\}&=\ell^2(X\setminus\des(\omega)).
\end{align*}
This
together with \eqref{fp2} and \eqref{fpzz}  yields
$\sgwon=\ell^2(X)$ and 
$[E]_{\szift,\cdszift^*}=\ell^2(X)$.

If $\varphi$  has  a  cycle, then the operator
\begin{equation*}\com|_{\ell^2(\des(x))} \quad \textrm{for}\quad x\in \gen_\varphi(1,1)
\end{equation*}
is a weighted shift on directed tree with root $x$.  Using
\cite[Lemma 3.4]{chav} again and  arguing as in the previous case we obtain 
\begin{align}\label{cz1}
\bigvee\{\cdcom^n y\colon y\in \la e_x\ra \oplus\nul (\com|_{\ell^2(\des(x))})^*), n\in  \natu\}&=\ell^2(\des(x)),\:\: x\in\gen_\varphi(1,1),\\
\bigvee\{\com^n y\colon y\in \la e_x\ra \oplus\nul (\com|_{\ell^2(\des(x))})^*), n\in  \natu\}&=\ell^2(\des(x)),\:\: x\in\gen_\varphi(1,1).\notag
\end{align}
Applying the assertion (i) of Lemma \ref{podst}
 again, we see that
\begin{align*}
\bigvee\{\com^n x\colon x\in E, n\in \natu\}&=\ell^2(X\setminus\bigcup_{x\in\gen_\varphi(1,1)}\des(x)),\\\bigvee\{\cdcom^n x\colon x\in E, n\in \natu\}&=\ell^2(X\setminus\bigcup_{x\in\gen_\varphi(1,1)}\des(x)).
\end{align*}
This and \eqref{cz1}, implies that $[E]_{\com^*,{\cdcom}}=\hil$ and $[E]_{\com,\cdcom^*}=\hil$.


(ii) First, we consider the case when $\varphi$
does not have a cycle. According to Lemma \ref{ker}, $E=\la e_\omega \ra\oplus \nul \com^*)$. If $e,f\in E$ and $n\in\natu$, then
\begin{align*}\la \com^{*n} e,  f\ra&=\la \com^{*n}P_{\la e_\omega \ra}e ,f \ra=0,\\
\la \cdcom^{*n} e,  f\ra&=\la \cdcom^{*n}P_{\la e_\omega \ra}e ,f \ra=0,
\end{align*}
where in the last step we used   assertion (iii) of Lemma \ref{podst}.
 This immediately yields that condition \eqref{prep} holds, which completes the proof
of the case when $\varphi$ does  not  have  a  cycle.

 If $\varphi$  has  a  cycle,  then similar reasoning leads to the equalities
 \begin{align*}\la \com^{*n} e,  f\ra&=\la \com^{*n}P_{\tilde{E}}e ,f \ra=0,\\
\la \cdcom^{*n} e,  f\ra&=\la \cdcom^{*n}P_{\tilde{E}}e ,f \ra=0,
\end{align*}
where $\tilde{E}:=\bigvee\{e_x\colon x\in \gen_\varphi(1,1)\}$.
 This completes the proof. 
\end{proof}


Before we turn to the
main theorem of this section, we need to give some definitions.
Suppose \eqref{li} holds with $\com$ in place of $T$.  
Let $\varphi$ be a self-map of $X$ and $E$ be a subspace of $\ell^2(X)$. Define 
\begin{equation*}
k_{\varphi}(E):=\min\{n\in \natu\colon E\subset\bigvee\{e_x\colon\gen_\varphi(1,n) \}\}.
\end{equation*}
A number $k_{\varphi}(E)$ will be called an \textit{index} of $E$ with respect to $\varphi$. Now, we can define some  subsets of $X$ by
\begin{equation*}
W^{E,\varphi}_0:=\gen_\varphi(1,k_{\varphi}(E)),
\end{equation*}
and then
\begin{equation*} W^{E,\varphi}_n:=\left\{\begin{array}{ll}
\varphi^{(-n)} (W^{E,\varphi}_0) & n\in \natu\textrm{ when $\varphi$ has a cycle}\\
\varphi^{(-n)}(W^{E,\varphi}_0) &n\in \mathbb{Z}\textrm{ otherwise.}
\end{array} \right.
\end{equation*}
Finally, we are ready to define radii of convergence for $\com$. The non-negative number
\begin{equation}\label{outer}
 \rplus :=\liminf_{n \to \infty}\Big( \hspace{-0.3cm} \sum_{\substack{x\in  W^{E,\varphi}_n\\n\Ge0}}\hspace{-0.3cm}|w^\prime(x)w^\prime(\varphi(x))\cdots w^\prime(\varphi^{(n-1)}(x))|^2\Big)^{-\frac{1}{2n}}
\end{equation}
will be called the \textit{outer radius of convergence} for $\com$, and similarly the non-negative number 
\begin{equation}\label{inner}
\rmin:= \left\{ \begin{array}{ll}
\sqrt[\tau]{\prod_{x\in\cycle}|w(x)|} & \textrm{if $\varphi$ has a cycle,}\\ \limsup_{n\to \infty}
\sqrt[n]{|w(\varphi^{1}(\omega))w(\varphi^{2}(\omega))\dots w(\varphi^{n}(\omega))|} & \textrm{otherwise},
\end{array} \right.
\end{equation}
where $\tau:=\card \cycle$ will be called the \textit{inner radius of convergence} for $\com$.

Now we are in a position to prove the main result of this section (compare with \cite[Theorem 2.2]{chav}).
\begin{theorem}\label{modcom} Let $X$ be a countable set, $w:X\to \comp$ be a complex
function on $X$ and  $\varphi:X\to X$ be a transformation of $X$,  which  has  finite  branching  index. Let $\com$ be a left-invertible weighted composition operator in $\ell^2(X)$. If $\rplus>\rmin$, then there exist a $z$-invariant reproducing kernel Hilbert
space $\mathscr{H}$ of $E$-valued holomorphic functions defined on the annulus $\anns$ and a unitary
mapping $U:\ell^2(X)\rightarrow\mathscr{H} $ such that $\mathscr{M}_zU=U\com$, where $\mathscr{M}_z$ denotes the operator
of multiplication by $z$ on $\mathscr{H}$, where $E$ is   as in \eqref{eee}. Moreover,  the following assertions hold
:
\begin{itemize}
    \item[(i)] the reproducing kernel $\jad:\anns\times\anns\to \boue$ associated with $\chil$ has the property that $\jad(\cdot,w)g\in \chil$ and $\la Uf,\jad(\cdot,w)g\ra=\la(Uf)(w),g\ra$ for $f,g\in\ell^2(X)$.
    \item[(ii)] the reproducing kernel $\jad$ has the following form:
    \begin{align*}
     \jad(z,\lambda)=\sum_{i,j\Ge1}A_{i,j}\frac{1}{z^i}\frac{1}{\lambda^j}&+\sum_{i\Ge1,j\Ge0}B_{i,j}\frac{1}{z^i}\lambda^j
     \\
     &+\sum_{i\Ge0,j\Ge1}C_{i,j}z^i\frac{1}{\lambda^j}
     +\sum_{i,j\Ge0}D_{i,j}z^i\lambda^j,
 \end{align*} where $A_{i,j}, B_{i,j}, C_{i,j}, D_{i,j}\in\sbou(E)$; if additionally $\varphi$ has no cycle, then
 \begin{align*}
A_{i,j}&=0 & &\textrm{if} & |i-j|&>\kk,\\
B_{i,j}&=0 & &\textrm{if} & i+j&>\kk,\\
C_{i,j}&=0 & &\textrm{if} & i+j&>\kk,\\
D_{i,j}&=0 & &\textrm{if} & |i-j|&>\kk.
\end{align*}

 \item[(iii)] if $\varphi$
does
not have a cycle, then
 the linear subspace generated by
$E$-valued polynomials in
$z$
and $\tilde{E}$-valued polynomials involving only 
negative powers of $z$ is dense in $\chil$, that is
\begin{equation*}
\bigvee(\{z^nE\colon n\in \natu\}\cup\{\frac{1}{z^n}\tilde{E} \colon n\in \mathbb{Z}_+\})=\chil,
\end{equation*}
where $\tilde{E}:=\bigvee\{e_x\colon x\in \gen_\varphi(1,1)\}$;
if $\varphi$
 has a cycle $\cycle$ with $\tau:=\card\cycle$, then there exist $\tau$ functions $f_1,\dots, f_\tau$ on $\anns$ given by the following Laurent series
 \begin{equation*}
   f_i(z):=\sum_{k=0}^\infty\sum_{i=1}^{\tau} \Lambda^kA_i\frac{1}{z^{k\tau+i}}, \quad i=1,...,\tau,
 \end{equation*}
where $\Lambda:=\prod_{x\in\cycle}w(x)$ such that the linear subspace generated by $E$-valued polynomials in $z$ and the above functions is dense in $\chil$, that is
    \begin{equation*}
\bigvee(\{z^nE\colon n\in \natu\}\cup\{f_i \colon i\in\set{1,\dots \tau}\})=\chil.
\end{equation*}

\end{itemize}
\end{theorem}
\begin{proof} We begin by showing that the $E$-valued series
\begin{equation*}  \sum_{n=0}^\infty\pe\cdcom^{*n}fz^n
\end{equation*} converges absolutely in $E$ on the disc $\disc(\rplus)$.
Let $f=\sum_{x\in X} f(x)e_x$. Applying Lemma \ref{podst}, we obtain
\begin{align*}
    \pe\cdcom^{*n}f&=\sum_{x\in X}f(x)w^\prime(x)w^\prime(\varphi(x))\cdots w^\prime(\varphi^{(n-1)}(x))\pe e_{\varphi^{(n)}(x)}\\&=\sum_{x\in  W^{E,\varphi}_n }f(x)w^\prime(x)w^\prime(\varphi(x))\cdots w^\prime(\varphi^{(n-1)}(x))\pe e_{\varphi^{(n)}(x)}.
\end{align*}
Observe that $W^{E,\varphi}_n \cap W^{E,\varphi}_m=\emptyset$ for $|m-n|>\kk$, $m,n\in\natu$. As a consequence, we have
\begin{equation}\label{rozl}
    \sum_{\substack{x\in  W^{E,\varphi}_n\\ n\Ge 0} }|f(x)|^2\Le(\kk+1)\sum_{x\in X}|f(x)|^2=(\kk+1)\|f\|^2.
\end{equation}
By the Cauchy-Schwarz inequality, we have
\begin{align*}
    \|\sum_{n=0}^k\pe\cdcom^{*n}fz^n\|\hspace{-0.1cm}&\Le\hspace{-0.3cm}\sum_{\substack{x\in  W^{E,\varphi}_n\\ n\Ge 0} }|f(x)w^\prime(x)w^\prime(\varphi(x))\cdots w^\prime(\varphi^{(n-1)}(x))z^n|\\&\Le \Big(\hspace{-0.3cm}\sum_{\substack{x\in  W^{E,\varphi}_n\\ n\Ge 0}}\hspace{-0.3cm}|f(x)|^2\Big)^\frac{1}{2}\hspace{-0.1cm}\Big(\hspace{-0.3cm}\sum_{\substack{x\in  W^{E,\varphi}_n\notag\\ n\Ge 0}}\hspace{-0.3cm}|w^\prime(x)w^\prime(\varphi(x))\cdots w^\prime(\varphi^{(n-1)}(x))z^n|^2\Big)^\frac{1}{2}\\&\overset{\eqref{rozl}}{\hspace{-0.1 cm}\Le}\hspace{-0.3 cm}\sqrt{k_{\varphi}(E)+1}\norm{f}\Big(\hspace{-0.3cm}\sum_{\substack{x\in  W^{E,\varphi}_n\notag\\n\Ge
    0}}\hspace{-0.4cm}|w^\prime(x)w^\prime(\varphi(x))\cdots w^\prime(\varphi^{(n-1)}(x))z^n|^2\Big)^\frac{1}{2}.\notag
\end{align*}
An application of the root test \cite[page 199]{rud} shows that the above series converges on the disc $\disc(\rplus)$.

Now we show that the $E$-valued series
\begin{equation*}
   \sum_{n=0}^\infty\pe\com^{n}f \frac{1}{z^n}
\end{equation*}
converges absolutely in $E$ on  $\comp\setminus\disc(\rmin)$.
First,  we  consider  the  case  when $\varphi$ does  not  have  a  cycle. For this,
 note that using Lemma \ref{podst} again,
\begin{align*}\pe\com^{n}f&=\sum_{x\in X}f(x)
\sum_{y\in\varphi^{-n}(x)}w(y)w(\varphi(y))\cdots w(\varphi^{(n-1)}(y))\pe e_y
\\&=\hspace{-0.2cm}\sum_{x\in  W^{E,\varphi}_{-n} }f(x)\sum_{y\in\varphi^{-n}(x)}w(y)w(\varphi(y))\cdots w(\varphi^{(n-1)}(y))\pe e_y
\\&=\hspace{-0.2cm}\sum_{x\in  W^{E,\varphi}_{-n} }f(x)w(\varphi^{-1}(x))w(\varphi^{-2}(x))\cdots w(\varphi^{[\varphi](x)}(x))\pe \com^{n+[\varphi](x)}e_{\varphi^{[\varphi](x)}(x)},
\end{align*}
for $n\Ge\kk$. Put $M:= \max\{1,\norm{\com}^{\kk}\}$.
Note that $n+[\varphi](x)\Le \kk$  and thus, $\|\pe \com^{n+[\varphi](x)}\|\Le M$.  Repeating the argument in \eqref{rozl}, we see that
\begin{equation*}
    \sum_{\substack{x\in  W^{E,\varphi}_{-n}\\ n\Ge \kk}}|f(x)|^2\Le(\kk+1)\|f\|^2.
\end{equation*} Hence, by the Cauchy-Schwarz inequality again, we have
\begin{align*}
    \|\sum_{n=\kk}^k&\pe\com^{n}f\frac{1}{z^n}\|\hspace{-0.1cm}\Le M\sum_{\substack{x\in  W^{E,\varphi}_{-n}\\ n\Ge\kk}}|f(x)w(\varphi^{-1}(x))w(\varphi^{-2}(x))\cdots w(\varphi^{[\varphi](x)}(x))\frac{1}{z^n} |\\
    &\Le M \Big(\hspace{-0.3cm}\sum_{\substack{x\in  W^{E,\varphi}_{-n}\\ n\Ge \kk}}\hspace{-0.3cm}|f(x)|^2\Big)^\frac{1}{2}\hspace{-0.1cm}\Big(\hspace{-0.3cm}\sum_{\substack{x\in  W^{E,\varphi}_{-n}\notag\\ n\Ge  \kk}}\hspace{-0.3cm}|w(\varphi^{-1}(x))w(\varphi^{-2}(x))\cdots w(\varphi^{[\varphi](x)}(x))\frac{1}{z^n}|^2\Big)^\frac{1}{2}
    \\&\Le M\sqrt{k_{\varphi}(E)+1}\norm{f}\Big(\hspace{-0.3cm}\sum_{\substack{x\in  W^{E,\varphi}_{-n}\notag\\n\Ge
    \kk}}\hspace{-0.3cm}w(\varphi^{-1}(x))w(\varphi^{-2}(x))\cdots w(\varphi^{[\varphi](x)}(x)\frac{1}{z^n}|^2\Big)^\frac{1}{2},\notag
\end{align*}
for $k\Ge\kk$.
Since the series on the right-hand side converges absolutely on $\comp\setminus\disc(\rmin)$,
we are done.
It remains to consider the other case when $\varphi$ has a cycle. It is easily seen that
\begin{equation*}
   \sum_{n=0}^\infty\pe\com^{n}f z^n = \sum_{n=0}^\infty\pe\com^{n}P_{\hil_{\varphi}}f z^n+\sum_{n=0}^\infty\pe\com^{n}P_{\hil_{\varphi}^\perp} f z^n,
\end{equation*}
where $\hil_{\varphi}:=\lin\set{e_x\colon x\in \cycle}$.
We show that both above series converge. Observe that
if $h\in \hil_{\varphi}^\perp \cap\bigvee \set{e_x\colon x\in \gen_\varphi(m,n)}$ for $m,n\in \natu$, then $\com h\in \bigvee\set{e_x\colon x\in \gen_\varphi{(m+1,n+1)}}
$. This, together with  the fact that $E\subset \bigvee\{e_x \colon x \in \gen_\varphi{(1,\kk)}\}$ yields
\begin{align*}
  \Big \| \sum_{n=0}^\infty\pe\com^{n}P_{\hil_{\varphi}^\perp}f z^n \Big\|= \Big\| \sum_{n=0}^{k_{\varphi}(E)}\pe\com^{n}P_{\hil_{\varphi}^\perp}f z^n \Big\|\Le \sum_{n=0}^{k_{\varphi}(E)}\norm{\com}^{n}\norm{f} z^n.
\end{align*}
Let us now observe that
\begin{equation}\label{okr}
P_{W_i}\com^{n+\tau} e_x=\Lambda P_{W_i}\com^{n}e_x, \quad  x\in \cycle,
\end{equation}
where $ W_i=\bigvee\set{e_x\colon x\in\gen_\varphi(i,i) }
$, $i\in\natu$. We now apply this observation to
estimate the following sum.
\begin{align*}
 \sum_{n=0}^\infty\pe&\com^{n}P_{\hil_{\varphi}}f\frac{1}{z^n}=\sum_{n=0}^\infty\pe\com^{n}\Big(\sum_{x\in\cycle}f(x)e_x\Big)\frac{1}{z^n}
 =\sum_{x\in\cycle}f(x)\sum_{n=0}^\infty\pe\com^{n}e_x\frac{1}{z^n}\\
 &=\sum_{x\in\cycle}\sum_{i=0}^{\kk}f(x)\sum_{n=0}^\infty\pe P_{W_i}\com^{n}e_x\frac{1}{z^n}\\
 &=\sum_{x\in\cycle}\sum_{i=0}^{\kk}\sum_{j=0}^\tau f(x)\sum_{n=0}^\infty\pe P_{W_i}\com^{n\tau+j}e_x\frac{1}{z^{n\tau+j}}\\
 &=\sum_{x\in\cycle}\sum_{i=0}^{\kk}\sum_{j=0}^\tau f(x)\sum_{n=0}^\infty \Lambda^n\pe P_{W_i}\com^{j}e_x\frac{1}{z^{n\tau+j}}.
 \end{align*}
The above series is a finite sum of a geometric
series with ratio $\frac{\Lambda}{z^\tau}$ and hence converges  absolutely on $\comp\setminus\disc(\rmin)$.
Putting
these results together,  we conclude that if $\rplus>\rmin$, then the series \ref{mod} with $\com$ in place of $T$ converges absolutely on 
$\anns$. Moreover, combining Theorems \ref{przep} and \ref{jad} with Lemma \ref{wond}, we deduce that there exist a $z$-invariant reproducing kernel Hilbert
space $\mathscr{H}$ of $E$-valued holomorphic functions defined on the annulus $\anns$ and a unitary
mapping $U:\ell^2(X)\rightarrow\mathscr{H} $ such that $\mathscr{M}_zU=U\com$.

Now we turn to the proof of the "moreover" part.

(i) This assertion is a direct consequence of Theorem \ref{jad}.

(ii) Recall that by \eqref{kern}, the kernel has the following form
\begin{align*}
     \jad(z,\lambda)&=\sum_{i,j\Ge1} A_{i,j}\frac{1}{z^i}\frac{1}{\lambda^j}+\sum_{i\Ge1,j\Ge0}B_{i,j}\frac{1}{z^i}\lambda^j\\&+\sum_{i\Ge0,j\Ge1} C_{i,j}{z^i}\frac{1}{\lambda^j}
     +\sum_{i,j\Ge0}D_{i,j}z^i\lambda^j,\notag
 \end{align*}
 where 
\begin{align*}
    A_{i,j}&=\pe \com^{i}\com^{*j}|_E,  & B_{i,j}&=\pe \com^{i}{\cdcom}^{j}|_E,\\
    C_{i,j}&=\pe \cdcom^{*i}\com^{*j}|_E, & D_{i,j}&=\pe \cdcom^{*i}\cdcom^{j}|_E.
\end{align*}
Observe that 
\begin{align*} 
\cdcom^{*m}\cdcom^n E&\subset\bigvee\{e_x\colon x\in W^{E,\varphi}_{n-m}\},&
\com^m\cdcom^n E&\subset\bigvee\{e_x\colon x\in  W^{E,\varphi}_{m+n}\},\\
\cdcom^{*m}\com^{*n} E&\subset\bigvee\{e_x\colon x\in  W^{E,\varphi}_{-m-n}\},&
\com^m\com^{*n} E&\subset\bigvee \{e_x\colon x\in W^{E,\varphi}_{m-n}\}.
\end{align*}
Since $E\subset \bigvee\{e_x\colon x\in  W^{E,\varphi}_{0}\}$, the subspace $E$ is orthogonal to $\bigvee\{e_x\colon x\in  W^{E,\varphi}_{k}\}$ if $|k|>\kk$. This completes the proof of (ii).

(iii) It follows from  Lemma \ref{wond} that
\begin{equation*}
\bigvee(\{\com^nE\colon n\in \natu\}\cup\{\com^{*n} E\colon n\in \natu\})=\hil.
\end{equation*}
We now consider two disjunctive cases which cover all possibilities.
First we consider the case when $\varphi$
does not have a
cycle.
Since $\com$ is unitarily equivalent to $\mul$, we see
that 
\begin{equation}\label{fp}
    U(\bigvee\{\com^nE\colon n\in \natu\})=\bigvee \{\mul^n(E)\colon n\in \natu\}.
\end{equation}
 Note that 
 \begin{equation}\label{spr}
     U(\cdcom^{*n}e_\omega)=\Big(\prod_{i=0}^{n-1}w(\varphi^{(i)}(\omega)) \overline{w^\prime(\varphi^{(i)}(\omega)}\Big)e_\omega\frac{1}{z^n},\qquad n\in\mathbb{Z}_+.
 \end{equation}It follows from \eqref{eee} and equality $\nul\cdcom^*)=\nul \com^*)$ that
 \begin{equation*}
     \bigvee\{\cdcom^{*n} E\colon n\in \natu\}=\bigvee\{\cdcom^{*n} e_\omega\colon n\in \natu\}.
 \end{equation*}
Combining this with \eqref{fp} and \eqref{spr} completes the proof
of the case when $\varphi$
does not have a cycle.
 It remains to consider the other case when $\varphi$ has a cycle.
Looking at the formula \eqref{eee},  we  deduce  that
\begin{equation}\label{znik}
\cdcom^{*} e = \left\{ \begin{array}{ll}
 \overline{ w^\prime(x)}e_{\varphi(x)} & \textrm{if $e=e_x$,  $x\in\gen_\varphi(1,1)$}\\
0 & \textrm{if $e\in\bigoplus_{x\in\gen_\varphi(1,1)}\nul (\com|_{\ell^2(\des(x))})^*)$}
\end{array} \right.
\end{equation}
Note that if $x\in\gen_\varphi(1,1)\cup\cycle$, then $\varphi(x)\in \cycle$.
 This, combined with \eqref{znik}, yields
 \begin{equation*} \bigvee\{\cdcom^{*n} E\colon n\in \mathbb{Z}_+\}=\bigvee\{e_x\colon x \in \cycle\}.
 \end{equation*}
 We now describe the value of the map $U:\hil\to\chil$ at $e_x$, $x\in\cycle$. 
In view of \eqref{okr},  we  can  deduce  from \eqref{mod} that $U(e_x)$, $x\in \cycle$ has the following form
\begin{equation*}
U(e_x)=\sum_{k=0}^\infty\sum_{i=1}^{\tau} \Lambda^kA^{x}_i\frac{1}{z^{k\tau+i}},
\end{equation*}
for some $A^{x}_i\in \tilde{E}$, $i=1,\dots,\tau$.  This completes the proof.

\end{proof}

\section{Examples}
In this section, we illustrate Theorem  \ref{modcom} 
by considering  several interesting
examples. 
We begin by giving an example of left-invertible  weighted composition operator  
for which the series in \eqref{mod} does  not  converge absolutely on any open subset of $\comp$.
\begin{ex}Fix $m\in \natu$ and set $X=\set{0,1,\ldots,m}$. Let $w:X\to\comp$ be a function and define a mapping $\varphi:X\to X$ by
\begin{equation*}
\varphi(i) = \left\{ \begin{array}{ll}
i+1 & \textrm{if $i<m$}\\
0 & \textrm{if $i=m$}
\end{array} \right.
\end{equation*}
(see Figure \ref{fig:M1}). Set $\Lambda:=w(0)w(1)\dots w(m)$.  Let $\com$ be the left-invertible composition
operator in $\comp^{m+1}$.
\begin{figure}
\centering
\begin{tikzpicture}
\begin{scope}[every node/.style={fill=gray!20,circle,thick}]
    \node (A) at (5.5,1.5) {$x_{m}$};
    \node (B) at (6.8,0.75) {$x_0$};
    \node (C) at (6.8,-0.75) {$x_1$};
    \node (D) at (5.5,-1.5) {$x_2$};
    \node (E) at (4.3, -0.75) {$x_3$};
\end{scope}

\begin{scope}[>={Stealth[black]},
              every node/.style={fill=white,circle},
              every edge/.style={draw=black,very thick}]
    \path [->] (A) edge[bend left=20] (B);
    \path [->] (B) edge[bend left=20]  (C);
    \path [->] (C) edge[bend left=20]  (D);
    \path [->] (D) edge[bend left=20]  (E);
    \path [->] (E) edge[bend left=50,loosely dotted]  (A);
\end{scope}
\end{tikzpicture}
\caption{} 
\label{fig:M1}
\end{figure}
The matrix of this operator is of the form
\begin{equation*}
    \com =
\left[ \begin{array}{cccccc}
0 & w(0) & 0 & \cdots  & 0\\
0 & 0 & w(1) & \cdots  & 0\\
0 & 0 & 0 & \cdots  & 0\\
\vdots & \vdots &\vdots & \ddots  & \vdots\\
w(m) & 0 & 0& \cdots & 0
\end{array} \right].
\end{equation*}
Let $E:=\lin\set{e_1}$. It is easy to see that $[E]_{\bil,\bil^{\prime*}}=\hil$. Using Lemma \ref{podst} and Lemma \ref{cdcom}, one can o verify that
\begin{align*}
\pe\com^{mk+r}x&=\Lambda^{k}\Big(\prod_{i=0}^{r-1}w(i)\Big) x_{r}e_0, \\
\pe\com^{\prime*(mk+r)}x&=\frac{1}{\Lambda^k}\Big(\hspace{-0.3 cm}\prod_{i=m+1-r}^{m}\hspace{-0.3 cm}w(i)\Big)^{-1}x_{n+1-r}e_0,
\end{align*}
for $r<n$, $r,k\in \natu$.
This shows that  formal Laurent series in \eqref{mod} takes the following form:
\begin{align*} U_x(z) &=\sum_{k=1}^\infty\sum_{r=0}^{n-1}
\Big(\Lambda^{k}\Big(\prod_{i=0}^{r-1}w(i)\Big) x_{r}e_0\Big)\frac{1}{z^{nk+r}}
\\&+
\sum_{k=0}^\infty
\Big(\sum_{r=0}^{n-1}\frac{1}{\Lambda^k}\Big(\hspace{-0.3 cm}\prod_{i=m+1-r}^{m}\hspace{-0.3 cm}w(i)\Big)^{-1}x_{n+1-r}e_0\Big )z^{nk+r}.
\end{align*}
Since $\com^*$ acts on the finite dimensional space, the spectrum of $\com^*$ is finite. Therefore, by assertion (iii) of Theorem \ref{spec} the above series does not  converge absolutely on any open
subset of $\comp$. Alternatively, one can prove this fact directly by calculating convergences radii.

\end{ex}

The next example shows that our analytic model generalises the Gellar's analytic model for bilateral weighted shift  \cite{gel}.  
\begin{ex}\label{pokgel}(Bilateral weighted shift)

Let $\bil:\ell^2(\mathbb{Z})\to\ell^2(\mathbb{Z})$ be a bilateral weighted shift with weights $\{\lambda_n \}_{n \in
\mathbb{Z}}$ and $\{e_n \}_{n \in
\mathbb{Z}}$ be the standard orthonormal basis of $\ell^2(\mathbb{Z})$. Then
\begin{equation*}\bil e_n=\lambda_{n+1} e_{n+1},\qquad n\in\mathbb{Z}
\end{equation*}
(see Figure \ref{fig:M2}). Let $E:=\lin\set{e_0}$. It is easy to see that $[E]_{\bil^*,\bil^{\prime}}=\hil$. It is worth noting that $\nul \bil^*)=\set{0}$ and thus $[\nul \bil^*)]_{\bil^*,\bil^{\prime}}=\set{0}$. This phenomenon is quite different
comparing with the case of left-invertible and analytic operators in which $[\nul T^*)]_{T^*,\cd}=\hil$, where $T$ is in this class.

\begin{figure}
\centering
\begin{tikzpicture}
\begin{scope}[every node/.style={fill=gray!20,circle,thick}]
    \node[fill=white] (A) at (1.5,0) {};
    \node (B) at (3,0) {$x_{-2}$};
    \node (C) at (4.5,0) {$x_{-1}$};
    \node[minimum width=27pt] (D) at (6,0) {$x_0$};
    \node[minimum width=27pt] (E) at (7.5, 0) {$x_1$};
    \node[minimum width=27pt] (F) at (9, 0) {$x_2$};
    \node[fill=white] (G) at (10.5, 0) {};
\end{scope}

\begin{scope}[>={Stealth[black]},
              every node/.style={fill=white,circle},
              every edge/.style={draw=black,very thick}]
    \path [->](B)  edge[loosely dotted](A) ;
    \path [->] (C) edge[] (B)  ;
    \path [->] (D) edge[] (C) ;
    \path [->] (E) edge[] (D) ;
     \path [->] (F) edge[] (E) ;
      \path [->](G)  edge[loosely dotted] (F) ;
\end{scope}
\end{tikzpicture}
\caption{}
\label{fig:M2}
\end{figure}

It is a matter of routine to verify that the Cauchy dual $\bil^{\prime*}$ of $\bil$ has the following form
\begin{equation*}
\bil^{\prime*}e_n=\frac{1}{\lambda_n}e_{n-1}, \qquad n\in \mathbb{Z}.
\end{equation*}
It
is now easily seen that 
\begin{equation*}\pe(\bil^{\prime*})^nx=\Big(\prod_{i=1}^n\lambda_i\Big)^{-1}x_ne_0,\qquad n\in \mathbb{Z}_+,
\end{equation*}
and
\begin{equation*}\pe\bil^nx=\Big(\hspace{-0.1cm}\prod_{i={-n+1}}^0\lambda_i\Big)x_{-n}e_0,\qquad n\in \mathbb{Z}_+.
\end{equation*}
Therefore, by \eqref{mod} the formal Laurent series takes the
form
\begin{align*} 
U_x(z) =\sum_{n=1}^\infty\Big(\prod_{i={-n+1}}^0\lambda_i\Big)x_{-n}\frac{1}{z^n}+
\sum_{n=0}^\infty\Big(\prod_{i=1}^n\lambda_i\Big)^{-1}x_nz^n.
\end{align*}
Comparing the above series with the formal Laurent series   in \cite[Section 2]{gel}  one can realize that our analytic model and the Gellar  analytic model coincide in the case of left-invertible   bilateral weighted shifts. Noting that
$W^{E,\varphi}_n=\{n\}$ for $n\in \mathbb{Z}$, we infer from \eqref{outer} and \eqref{inner} that 
\begin{equation*}
 \rplus =\liminf_{n \to \infty}\sqrt[n]{\prod_{i=1}^n|\lambda_i|}
\end{equation*}
and
\begin{equation*}
\rmin=  \limsup_{n\to \infty}
\sqrt[n]{\prod_{i={-n+1}}^0|\lambda_i}|.
\end{equation*}
In this case, the reproducing kernel $\jad:\annr\times\annr\to\boue$ is diagonal and given by
\begin{equation*}
     \jad(z,\lambda)=\sum_{i=1}^\infty \prod_{i={-n+1}}^0|\lambda_i|^2\frac{1}{(z\bar{\lambda})^i}
     +\sum_{i=0}^\infty\Big(\prod_{i=1}^n|\lambda_i|^2\Big)^{-1}(z\bar{\lambda})^i.
 \end{equation*}
\end{ex}
Now we provide two more examples of left-invertible compositions operators over connected  directed  graphs  induced  by  self-maps  whose  vertices,  all  but  one,
have valency one and the valency of the remaining vertex is nonzero.
\begin{ex}Set $m\in \natu$ and $X=\set{0,1,\dots m}\sqcup \set{(0,i)\colon i\in \natu}$. Let $w:X\to \comp$ be a measurable function and  $\varphi:X\to X$ be transformation of $X$ defined by
\begin{equation*}
\varphi(x )= \left\{ \begin{array}{ll}
(0,i-1) & \textrm{for $x=(0,i)$,  $i\in\natu\setminus\set{0}$},\\
m & \textrm{for $x=(0,0)$},\\
i-1 & \textrm{for $x=i$ and $i\in \{1,\dots,m\}$,}\\m&\textrm{for $x=0$,}
\end{array} \right.
\end{equation*}
(see Figure \ref{fig:M3}). Let $\com:\ell^2(X)\to\ell^2(X)$ be a left-invertible composition operator. It is easily seen that
\begin{equation*}\com e_x= \left\{ \begin{array}{ll}
w({(0,i+1)})e_{(0,i+1)} & \textrm{for $x=(0,i)$,  $i\in\natu\setminus\set{0}$}\\
w(i+1)e_{i+1} & \textrm{for $x=i$ and $i\in\{0,1,\dots,m\}$}\\
w(0)e_0+w({(0,0)})e_{(0,0)}&\textrm{for $x=m$.}
\end{array} \right.
\end{equation*}
\begin{figure}
\centering
\begin{tikzpicture}
 \begin{scope}[every node/.style={fill=gray!20,circle,thick, minimum width=30pt}]
    \node (A) at (2,2) {$x_{1}$};
    \node (B) at (3.41,1.41) {$x_0$};
    \node (C) at (4,0) {$x_m$};
    \node (D) at (3.41,-1.41) {$x_{m-1}$};
    \node (E) at (2,-2) {$x_{m-2}$};
    \node (F) at (0.59, -1.41) {$x_{m-3}$};
    \node (G) at (5.5,0) {$x_{0,0}$};
    \node (H) at (7,0) {$x_{0,1}$};
    \node[fill=white] (I) at (9.5,0) {};
\end{scope}
\begin{scope}[>={Stealth[black]},
              every node/.style={fill=white,circle},
              every edge/.style={draw=black,very thick}]
    \path [->] (A) edge[bend left=20] (B);
    \path [->] (B) edge[bend left=20]  (C);
    \path [->] (C) edge[bend left=20]  (D);
    \path [->] (D) edge[bend left=20]  (E);
    \path [->] (E) edge[bend left=20]  (F);
    \path [->] (F) edge[bend left=60,loosely dotted]  (A);
    \path [->] (G) edge  (C);
    \path [->] (H) edge  (G);
    \path [->] (I) edge[loosely dotted]  (H);
\end{scope}
\end{tikzpicture}
\caption{} \label{fig:M3}
\end{figure}It is routine to verify that  $\nul \com^*)=\lin\{ \overline{w({(0,0)})}e_{0}-\overline{w({0})}e_{(0,0)}\}$. Let $E:=\lin\{ e_{(0,0)}\}$. One can check that this one-dimensional subspace satisfies \eqref{li}.
This implies that\footnote{To make the notation more readable, we adopt the convention that $\prod_{i=n}^{m}a_i=1$ if $n>m$.}
\begin{align*}\pe(\cdcom^*)^nx&=\Big(\prod_{i=1}^nw{(0,i)}\Big)^{-1}x_ne_{(0,0)}, \\
\pe\com^{nm+r+1}x&=\Lambda^{n}w({(0,0)})\Big(\prod_{i=0}^{r-1}w({m-i})\Big)x_{m-r}e_{(0,0)},
\end{align*}
for $r<m$, $r,n\in \natu$. Hence, by \eqref{mod} the Hilbert space $\chil$ consist of the functions of the form
\begin{align*}
U_x(z) &=\sum_{n=1}^\infty\sum_{r=0}^{k}
\Lambda^{k}w({(0,0)})\Big(\prod_{i=0}^{r-1}w({m-i})\Big)x_{m-r}\frac{1}{z^{nm+r+1}}\\&+
\sum_{n=0}^\infty
\Big(\prod_{i=1}^nw({(0,i)})\Big)^{-1}x_nz^{n}.\notag
\end{align*}
The formulas for the inner and outer radius of convergence take the following form
 \begin{equation*}
 \rplus =\liminf_{n \to \infty}\sqrt[n]{\prod_{i=1}^n|w({(0,i)})|}
\end{equation*}
and
\begin{equation*}
\rmin=\sqrt[m+1]{ \prod_{i={0}}^m|w(i)|}.
\end{equation*}
The reproducing kernel $\jad:\annr\times\annr\to\boue$ by Theorem \ref{jad} takes the form
 \begin{align*}
     \jad(z,\lambda)&=\sum_{i\Ge1,j\Ge1} \Lambda^{i}\bar{\Lambda}^j|w({(1,0)})|^2\Big(\prod_{i=0}^{r-1}|w({m-i})|^2\Big)\frac{1}{z^{im+r+1}\bar{\lambda}^{jm+r+1}}
     \\&+\sum_{i=0}^\infty\Big(\prod_{i=1}^n|w({(0,i)})|^2\Big)^{-1}(z\bar{\lambda})^i.
 \end{align*}
\end{ex}
In
Example \ref{ost}
below,  we  demonstrate composition
operator which  can be identified with weighted shift on a rootless directed tree. A weighted shift on a
directed tree is a circular operator \cite[Theorem 3.3.1.]{memo}. Hence, without loss of generality we can
assume that the weight is positive.
\begin{ex}\label{ost}
Set $m\in \natu$ and $X=\natu\sqcup \set{(i,j)\colon i\in\set{0,1}, j\in \natu }$. Let $w:X\to (0,\infty)$ be a measurable function and  $\varphi:X\to X$ be transformation of $X$ defined by
\begin{equation*}
\varphi(x )= \left\{ \begin{array}{ll}
(i,j-1) & \textrm{for $x=(i,j)$,  $i\in\mathbb{Z}_+$, $j\in\set{0,1}$},\\
0 & \textrm{for $x\in\{(0,0),(1,0)\}$},\\
x+1 & \textrm{for $x\in\natu$},
\end{array} \right.
\end{equation*}
 \begin{figure}
     \centering
 \begin{tikzpicture}
 \begin{scope}[every node/.style={fill=gray!20,circle,thick, minimum width=30pt}]
    \node (A) at (0,0) {$x_{2}$};
    \node (B) at (2,0) {$x_1$};
    \node (C) at (4,0) {$x_0$};
    \node[fill=white] (F) at (-2,0) {};
    \node (G) at (5.41,1.41) {$x_{0,0}$};
    \node (H) at (7,1.41) {$x_{0,1}$};
    \node[fill=white] (I) at (9.5,1.41) {};
    \node (J) at (5.41,-1.41) {$x_{1,0}$};
    \node (K) at (7,-1.41) {$x_{1,1}$};
    \node[fill=white] (L) at (9.5,-1.41) {};
\end{scope}
\begin{scope}[>={Stealth[black]},
              every node/.style={fill=white,circle},
              every edge/.style={draw=black,very thick}]
    \path [->] (B) edge[] (A);
    \path [->] (C) edge[]  (B);
    \path [->] (A) edge[loosely dotted]  (F);
    \path [->] (G) edge  (C);
    \path [->] (H) edge  (G);
    \path [->] (I) edge[loosely dotted]  (H);
    \path [->] (J) edge  (C);
    \path [->] (K) edge  (J);
    \path [->] (L) edge[loosely dotted]  (K);
\end{scope}
\end{tikzpicture}
 \caption{} \label{fig:M4}
\end{figure}
(see Figure \ref{fig:M4}). Let $\com\in \boldsymbol B(\ell^2(X))$ be a left-invertible composition
operator. As an immediate consequence of the definition, we obtain
\begin{equation*}\com e_x= \left\{ \begin{array}{ll}
w({(i,j+1)})e_{(i,j+1)} & \textrm{for $x=(i,j)$,  $i\in\set{0,1}$, $j\in\natu$,}\\
w({i-1})e_{i-1} & \textrm{for $x=i$ and $i\in \mathbb{Z}_+$,}\\
w(1,0)e_{(1,0)}+w({(1,0)})e_{(1,0)}&\textrm{for $x=0$.}
\end{array} \right.
\end{equation*}
 Applying Lemma \ref{ker}, we get
$\nul\com^*)=\lin\{w({0,0})e_{(1,0)}-w({1,0})e_{(0,0)}\}$. Therefore, by Lemma \ref{wond}
$E=\lin\{e_0,w({2,0})e_{(1,0)}-w({2,0})e_{(1,0)}\}$.
By more or less elementary
calculations,  one can verify that
\begin{align*}
\pe\com^{n}x&=\Big(\prod_{i=0}^{n-1}w(i)\Big)x_ne_0,\\
\pe(\cdcom^*)^nx&=W\Big[w(0,0)\Big(\prod_{i=1}^{n-1}w{(0,i)}\Big)^{\hspace{-0.1cm}-1}\hspace{-0.2cm}x_{(0,n-1)}+w{(1,0)}\Big(\prod_{i=1}^{n-1}w{(1,i)}\Big)^{\hspace{-0.1cm}-1}\hspace{-0.2cm}x_{(1,n-1)}\Big]e_0
\\&+\Big[\Big(\prod_{i=1}^{n}w{(1,i)}\Big)^{-1}w{(1,0)}x_{(0,n)}-\Big(\prod_{i=1}^{n}w{(1,i)}\Big)^{-1}w{(0,0)}x_{(1,n)}\Big]\tilde{e},
\end{align*}
for $n\in \mathbb{Z}_+$, where 
\begin{equation*}
 W=\frac{1}{w^2{(0,0)}+w^2{(1,0)}},\qquad   \tilde{e}=\frac{w{(1,0)}e_{(0,0)}-w{(0,0)}e_{(1,0)}}{w^2{(0,0)}+w^2{(1,0)}}.
\end{equation*}
Therefore, by \eqref{mod} the formal Laurent series takes the
form
\begin{align*} 
U_x(z) &=\sum_{n=1}^\infty\Big(\prod_{i=0}^{n-1}w(i)\Big)x_ne_0\frac{1}{z^n}\\&+
\sum_{n=0}^\infty W\Big[w(0,0)\Big(\prod_{i=1}^{n-1}w{(0,i)}\Big)^{-1}\hspace{-0.2cm}x_{(0,n-1)}+w{(1,0)}\Big(\prod_{i=1}^{n-1}w{(1,i)}\Big)^{-1}\hspace{-0.2cm}x_{(1,n-1)}\Big]e_0\\&+\Big[\Big(\prod_{i=1}^{n}w{(1,i)}\Big)^{-1}w{(1,0)}x_{(0,n)}-\Big(\prod_{i=1}^{n}w{(0,i)}\Big)^{-1}w{(0,0)}x_{(1,n)}\Big]\tilde{e}]z^n
\end{align*}
We infer from \eqref{outer} and \eqref{inner} that the formulas for the inner and outer radius of convergence take the following form
\begin{equation*}
 \rplus :=\liminf_{n \to \infty}\Big(\sum_{i=0}^1(\frac{1}{w^2(i,0)}+\frac{1}{w^2(i,n+1)})\prod_{j=1}^n\frac{1}{w^2(i,j)}\Big)^{-\frac{1}{2n}}
\end{equation*}
and
\begin{equation*}
\rmin:=  \limsup_{n\to \infty}
\sqrt[n]{\prod_{i={0}}^{n-1}|w(i)|}.
\end{equation*}
\end{ex}














\textbf{Acknowledgements.}
I am very grateful to Professor Jan Stochel for his valuable comments, as well as substantial help he provided
me while working on this paper.
\bibliographystyle{amsalpha}

\begin{thebibliography}{99}
   \bibitem{ana}{  A. Anand, S. Chavan, Z. J. Jab{\l}o\'{n}ski \and J. Stochel}, Complete systems of unitary invariants for
some classes of 2-isometries, https://arxiv.org/abs/1806.03229
%
\bibitem{chaf} {  C. Benhida, P. Budzynski \and J. Trepkowski}, `Aluthge transforms of unbounded weighted composition operators in $ L^ 2$-spaces', arXiv preprint arXiv:1809.01205 (2018).
%
\bibitem{contr}{  H. Bercovici, C. Foias, L. K{\'e}rchy \and B. S. Nagy}, {\em Harmonic analysis of operators on Hilbert space}, Springer Science \& Business Media. (2010)
%
\bibitem{sark} {  T. Bhattacharyya, E. K. Narayanan \and J. Sarkar}
`Analytic model of doubly commuting contractions', {\em  Operators and Matrices}
Volume 11, Number 1 (2017), 101--113.
%
\bibitem{burd} {  Z. Burdak, M. Kosiek, P. Pagacz \and M. S{\l}oci{\'n}ski},  `On the commuting isometries', {\em Linear Algebra and its Applications}, 516,  (2017) 167--185.
 %
\bibitem{burd2} {  Z. Burdak, M. Kosiek, P. Pagacz \and M. S{\l}oci{\'n}ski},  `Invariant subspaces of $H^2(\mathbb{T}^2)$ and $L^2(\mathbb{T}^2)$ preserving compatibility' {\em Journal of Mathematical Analysis and Applications}, 455(2) (2017) 1706--1719.
%
\bibitem{planeta} {  P. Budzy{\'n}ski, P. Dymek, A. P{\l}aneta \and M. Ptak}, `Weighted shifts on directed trees. Their multiplier algebras, reflexivity and decompositions', arXiv preprint arXiv:1702.00765 (2017).
%
\bibitem{dym2}{  P. Budzy{\'n}ski, P. Dymek, Z. J. Jab{\l}o\'{n}ski \and J. Stochel}, `Subnormal weighted shifts on directed trees and composition
operators in $L^2$-spaces with non-densely defined powers', {\em Abs. Appl. Anal.} (2014) 791--817.
%
\bibitem{bdp}{  P. Budzy{\'n}ski,  P. Dymek \and  M. Ptak}, `Analytic structure of weighted shifts on directed trees', {\em Mathematische Nachrichten}  290.11-12 (2016) 1612--1629.
%
\bibitem{b-j-j-s} {  P. Budzy\'{n}ski, Z. J.Jab{\l}o\'nski, I. B. Jung \and J. Stochel}, `On unbounded
composition operators in $L^2$-spaces', {\em Ann. Mat.
Pur. Appl.} 193 (2014) 663--688.

 \bibitem{adv} {  P. Budzy{\'n}ski, Z. J. Jab{\l}o\'{n}ski, I. B. Jung \and J. Stochel}, `Subnormality of unbounded composition
operators over one-circuit directed graphs: exotic examples', {\em Adv. Math.} 310 (2017) 484--556.

\bibitem{triv} {  P. Budzy{\'n}ski, Z. J. Jab{\l}o\'{n}ski, I. B. Jung \and J. Stochel}, `Subnormal weighted shifts on directed trees whose $n$-th powers have trivial domain', {\em J. Math. Anal. Appl} 435(1) (2016) 302--314.

\bibitem{bur} {  C. Burnap,  I. B. Jung \and  A. Lambert},  `Separating  partial normality classes with composition operators', {\em J. Operator Theory} 53 (2005)  381--397.

\bibitem{carl}{  J. Carlson}, `The spectra and commutants of some weighted composition operators', {\em Trans.
Amer. Math. Soc.} 317 (1990)  631--654.

\bibitem{poder}{  S. Chavan, S. Podder, \and S. Trivedi}. `Commutants and reflexivity of multiplication tuples on vector-valued reproducing kernel Hilbert spaces', {\em Journal of Mathematical Analysis and Applications}, 466.2 (2018) 1337--1358.



\bibitem{chav} {  S. Chavan \and S. Trivedi}, `An analytic model for left-invertible weighted shifts on directed trees', {\em J. London Math. Soc.} 94 (2016) 253--279.

\bibitem{chav2} {  S. Chavan, D. K. Pradhan \and S. Trivedi}, `Multishifts on dir
ected Cartesian products of rooted directed trees', arXiv: https://arxiv.org/abs/1607.03860

\bibitem{dib} {  P. Dibrell \and J. T. Campbell}, `Hyponormal powers of compos
ition operators', {\em Proc. Amer. Math.
Soc.} 102 (1988) 914--918.

\bibitem{dym} {  P. Dymek, A. P{\l}aneta \and M. Ptak}, `Generalized multipliers for left-invertible analytic operators and their applications to commutant and reflexivity', {\em Journal of Functional Analysis} (2018).
\bibitem{emb}{  M. R. Embry-Wardrop \and A. Lambert}, `Subnormality for the adjoint of a composition operator on $L^2$', {\em J. Operator Theory} 25 (1991) 309--318.

\bibitem{forel}{  F. Forelli},  `The isometries of $H^p$', {\em Canad J Math}, 16 (1964) 721--728. 

\bibitem{geh}{  G. P. Geh\'er}, 'Asymptotic behaviour and cyclic properties of weighted shifts on directed trees', {\em J. Math. Anal. Appl}. 440 (2016) 14--32.

\bibitem{gel}{  R. Gellar},  `Operators commuting with a weighted shift', {\em Proceedings of the American Mathematical Society} 23.3 (1969) 538--545.  

\bibitem{gel2}{  R. Gellar}, `Cyclic vectors and parts of the spectrum of a weighted shift', {\em Proc. Amer. Math. Soc.} 23 (1969) 538--545.


\bibitem{har} {  D.  Harrington \and  R.  Whitley},  `Seminormal  composition  ope
rators', {\em J. Operator Theory}, 11 (1984) 125--135.

   \bibitem{memo} {  Z. J. Jab{\l}o\'{n}ski, I. B. Jung \and
J. Stochel}, `Weighted shifts on directed trees', {\em
Mem. Amer. Math. Soc.} {\bf 216} (2012), no. 1017.

 \bibitem{9} {  Z. J. Jab{\l}o\'{n}ski, I. B. Jung,
\and J. Stochel}, `A non-hyponormal operator generating
Stieltjes moment sequences', {\em J. Funct. Anal.},
{\bf 262} (2012) 3946--3980.

\bibitem{dodana}{  Z. J. Jab{\l}o\'{n}ski, I. B. Jung,
\and J. Stochel}, `Operators with absolute continuity properties:
an application to quasinormality', {\em Studia Math.},
 {\bf 216}  (2013) 11--30.
 
   \bibitem{jabl} {  Z. J. Jab{\l}o\'{n}ski, I. B. Jung,
\and J. Stochel}, `Unbounded quasinormal operators revisited',
{\em Integr. Equ. Oper. Theory}, {\bf 79} (2014),
135--149.


\bibitem{sque}{  Z. J. Jab{\l}o\'{n}ski, I. B. Jung\and  J. Stochel}, `A hyponormal weighted shift on a directed tree whose square has trivial
domain', {\em Proc. Amer. Math. Soc.} 142 (2014) 3109--3116.

\bibitem{jewe}{  N. P. Jewell\and A. R. Lubin},
`Commuting weighted shifts and analytic function theory in se
veral variables', {\em J. Oper. Theory}
1
(1979) 207--223.


\bibitem{kall}{  G. Kallianpur, A. Miamee \and H. Niemi}, `On the prediction theory of two-parameter stationary random fields', {\em J. Multivariate Anal.} 32 (1990), no. 1, 120--149.

\bibitem{lam1} {  A. Lambert}, `Subnormal composition operators',
{\em Proc. Amer. Math. Soc.}
103 (1988) 750--754.

\bibitem{lam2} {  A. Lambert}, `Normal extensions of subnormal composition operators',
{\em Michigan Math. J.}
35
(1988) 443--450.

\bibitem{kall2}{  G. Kallianpur \and V. Mandrekar}, `Nondeterministic random fields and Wold and Halmos decompositions
for commuting isometries', {\em Prediction theory and harmonic analysis}, (North-Holland, Amsterdam,
1983) 165--190.

\bibitem{kos} {  J. Ko{\'s}mider},  `On unitary equivalence of bilateral operator valued weighted shifts'. arXiv preprint arXiv:1806.10321 (2018).

\bibitem{mand} {  V. Mandrekar}, `The validity of Beurling theorems in polydiscs', {\em Proc. Amer. Math. Soc.} 103 (1988), no. 1,
145--148.

 \bibitem{mart}{  R. A. Martınez-Avendano}, `Hypercyclicity of shifts on weighted $L^p$ spaces of directed trees', {\em J. Math. Anal. Appl.} 446 (2017) 823--842.

\bibitem{nord} {  E. A. Nordgren}, `Composition operators', {\em Canad. J. Math.} 20 (1968) 442--449.

\bibitem{nord2}
{  E. A. Nordgren}, `Composition operators on Hilbert spaces', {\em
Lecture Notes in Math}., 693 (Springer-
Verlag, Berlin 1978), 37--63.

\bibitem{paga}{  P. Pagacz},   `On Wold-type decomposition', {\em Linear Algebra and Its Applications}, 436(9) (2012) 3065--3071.

\bibitem{ja} {  P. Pietrzycki},  `The single equality $A^{ *n}A^n=(A^* A)^n$ does not imply the quasinormality of weighted shifts on rootless directed trees', {\em Journal of Mathematical Analysis and Applications} 435(1) (2016) 338--348.

\bibitem{ja2}{  P. Pietrzycki},  `Reduced commutativity of moduli of operators', {\em Linear Algebra and its Applications}, 557 (2018) 375--402.

\bibitem{ja4} {  P. Pietrzycki},  `Generalized multipliers for left-invertible operators and applications', arXiv preprint arXiv:1809.03882 (2018).

\bibitem{pin}{  J. D. Pincus , D. Xia \and  J. Xia} `The analytic model of a hyponormal operator with rank one self-commutator', {\em Integral Equations and Operator Theory}, 7.4 (1984) 516--535.

\bibitem{rid}{  W. C. Ridge}, `Spectrum of a composition operator', {\em Proc. Amer. Math. Soc.} 37 (1973) 121--127.

\bibitem{rud} {  W. Rudin}, {\em Functional Analysis}, (McGraw-Hill, International Editions 1991)

\bibitem{jay} {  J. Sarkar},  `Wold decomposition for doubly commuting isometries', {\em Linear Algebra and its Applications}, 445 (2014) 289--301.

\bibitem{sloc}{  M. S{\l}oci{\'n}ski}, `On Wold type decompositions of a pair of commuting isometries', {\em Ann. Pol. Math.} 37 (1980) 255--262.

\bibitem{sin} {  R. K. Singh}, `Compact and quasinormal composition operators',
{\em Proc. Amer. Math. Soc.} 45
(1974) 80--82.

\bibitem{sin2} {  R. K. Singh \and J. S. Manhas}, `Composition operators on function spaces', {\em Elsevier Science Publishers}
B.V., North-Holland-Amsterdam, 1993.



\bibitem{shild}{  A. Shields}, `Weighted shift operators, analytic function theory', {\em Math. Surveys, Amer. Math.
Soc.}, Providence, 1974, 49--128.

 \bibitem{shi}{  S. Shimorin},  `Wold-type decompositions and wandering subspaces for operators close to isometries', {\em Journal f{\"u}r die Reine und Angewandte Mathematik} 531 (2001) 147--189.

\bibitem{shaf1} {  F. H. Szafraniec}, `The reproducing kernel Hilbert space and its multiplication operators', {\em Operator Theory:
Advances and Applications} 114 (2000) 253--263.

\bibitem{shaf2} {  F. H. Szafraniec}, `Multipliers in the reproducing kernel Hilbert space, subnormality and noncommutative complex analysis', {\em Operator Theory: Advances and Applications} 143 (2003), 313--331.


\bibitem{trep}{  J. Trepkowski},  `Aluthge transforms of weighted shifts on directed trees', {\em Journal of Mathematical Analysis and Applications} 425.2 (2015) 886--899.

\bibitem{whit} {  R.  Whitley},  `Normal  and  quasinormal  composition  operators',
{\em Proc. Amer. Math. Soc.} 70
(1978) 114--118.

\bibitem{wold} {  H. Wold}, 'A study in the analysis of stationary time series', {\em Almquist and Wiksell}, Uppsala, 1938.

\bibitem{xia} {  D. Xia}, `On the analytic model of a class of hyponormal operator', {\em Integral Eq. Oper. Theory} 6 (1983), 134--157.

\bibitem{xia2}{  D. Xia}, `Analytic theory of subnormal operators', {\em Integral Equations and Operator Theory} 10.6 (1987) 880--903.
\bibitem{xia3} {  D. Xia}, {\em Analytic Theory Of Subnormal Operators}, World Scientific, 2014.

\bibitem{xia4}{  D. Xia},  `A Model for Some Class of Operators', {\em Integral Equations and Operator Theory} 81.2 (2015) 237--254.
   
   \end{thebibliography}
   
\end{document}